\documentclass{amsart}

 \usepackage{tabularx}
 \usepackage{booktabs} 
 \usepackage{array}    
\usepackage{amsmath,amsthm,amssymb,amscd,enumerate,mathtools,enumitem, tikz-cd}
\usepackage{amsfonts}
\usepackage{rotating}
\usepackage{euscript}
\usepackage{pst-node}
\usepackage{epsfig,verbatim}
\usepackage{stackengine,scalerel}
\stackMath

\def\be{\begin{equation}}
\def\ee{\end{equation}}

\def\P{{\mathbb P}}

\def\Q{{\mathbb Q}}

\def\phi{{\varphi}}

\def\deg{{\rm deg\,}}

\def\bp{\begin{proposition}}
\def\ep{\end{proposition}}

\def\bt{\begin{theorem}}
\def\et{\end{theorem}}
\def\br{\begin{remark}}
\def\er{\end{remark}}
\def\be{\begin{equation}}
\def\bee{\begin{equation*}}
\def\l{\label}

\def\ee{\end{equation}}
\def\eee{\end{equation*}}
\def\bl{\begin{lemma}}
\def\el{\end{lemma}}
\def\bc{\begin{corollary}}
\def\ec{\end{corollary}}
\def\pr{\noindent{\it Proof. }}

\def\bd{\begin{definition}}
\def\ed{\end{definition}}

\def\tilde{\widetilde}
\def\h{\widehat}

\def\hat{\widehat}

\newtheorem{theorem}{Theorem}[section]
\newtheorem{lemma}[theorem]{Lemma}
\newtheorem{definition}[theorem]{Definition}
\newtheorem{corollary}[theorem]{Corollary}
\newtheorem{proposition}[theorem]{Proposition}
\newtheorem{problem}[theorem]{Problem}

\theoremstyle{definition}

\theoremstyle{definition}
\newtheorem{remark}[theorem]{Remark}
\def\bpr{\begin{problem}}
\def\epr{\end{problem}}

\mathtoolsset{showonlyrefs}

\begin{document}

\title{Complex Analysis and Existence Problems for plane  Graphs}
\date{}
\author[F. Pakovich]{Fedor Pakovich}
%\thanks{
%This research was supported by ISF Grant  No. 1092/22}
%\address{Department of Mathematics, Ben Gurion University of the Negev, Israel}
%\email{
%pakovich@math.bgu.ac.il}

%\keywords{}
%\subjclass[2010]{}

\begin{abstract}
We show that a variety of known and new results concerning connected plane graphs whose vertex and face degrees satisfy prescribed uniformity conditions with at most two exceptions can be deduced from recent results on the Hurwitz existence problem regarding the realizability of branch patterns of rational functions. Our method also yields a description of the Belyi functions corresponding to such graphs.
\end{abstract}

\maketitle

\section{Introduction}

The problem of the existence of polyhedra and plane graphs possessing specific regularity properties with respect to their vertex and face degrees has a long and rich history. The origin of this line of research dates back to the classical classification of Platonic solids, which are polyhedra that are both vertex- and face-regular. By virtue of the remarkable correspondence between plane graphs and Belyi functions provided by the theory of \emph{dessins d'enfants}, the existence problem for a plane graph with prescribed vertex and face degrees is equivalent to the existence problem for a rational function with three critical values and a specified ramification pattern over them. The latter is a special case of the classical Hurwitz existence problem concerning the realizability of ramification patterns of coverings between surfaces (see \cite{pepe} for an introduction to the subject).

The correspondence between plane graphs and functions has long been successfully utilized, 
though predominantly in a unidirectional manner. Namely, constructing a graph with 
certain properties (or proving its non-existence) typically yields a corresponding 
result for rational functions (see \cite{mar, lz, ko, hu, pz, pp} for just a few examples). 
In the present paper, we provide several examples demonstrating how this correspondence 
operates in the opposite direction. Specifically, we show how the non-existence of 
certain rational functions can be employed to establish the non-existence of plane graphs 
with specific properties. Moreover, our method imposes constraints not only on the 
vertex and face degrees of the graphs under consideration, but also on their geometry. 
The majority of the results in this paper build upon the recent work \cite{pak}. 
However, to make the exposition self-contained, we provide alternative proofs for 
the required results from \cite{pak}, which we believe are of independent interest.

In brief, this paper explores plane graphs that are, in a certain sense, close to the famous Platonic graphs, which are defined as being both vertex-regular and face-regular. Our results demonstrate that when such a graph exists, its Belyi function is algebraically related to the Belyi function of the corresponding Platonic graph. In turn, these relations provide a specific analytic intuition that is not immediately apparent from the purely combinatorial picture.

Throughout this paper,  the term 
``graph'' exclusively refers to a connected plane graph that may contain multiple 
edges and loops. By ``Platonic graphs,'' we mean graphs associated with 
Platonic solids completed by cycles. Note, however, that some of the results 
cited below were originally formulated under more restrictive assumptions. 
Since we provide independent proofs for these results in our setting anyway, 
we do not dwell on the precise definitions of graphs or triangulations used in the 
cited literature, referring the reader to the original works.

The definition of ``closeness'' to a Platonic graph has several versions, but it always implies certain conditions on vertex and face degrees formulated in terms of divisibility or equality, with a possible small number of exceptions. One class of graphs falling under this description is the class of spherical triangulations, that is, graphs whose face degrees are \emph{equal} to $3$, with the property that the degrees of all vertices are \emph{divisible} by a given integer $k$. We start by mentioning several results about such graphs or their duals. Note that by Euler's formula, the sum of vertex degrees for a triangulation with $v$ vertices is $6v-12$, implying that $k$ can only take values from $\{2,3,4,5\}$, since for $k \ge 6$ the total sum of degrees would exceed this value.

The following statement was formulated without proof by Kempe in \cite{kem}: 
a spherical triangulation is $3$-vertex-colorable if and only if it is Eulerian, 
that is, all its vertex degrees are even. Note that the ``only if'' part is 
straightforward, so the core of the statement is that any Eulerian triangulation 
admits such a coloring. Moreover, Heawood claimed in \cite{hea}, also without proof, 
that the latter statement remains true if we weaken the assumption that $G$ is 
a triangulation to the condition that all face degrees of $G$ are divisible by $3$. 
According to a paper by Tsai and West \cite{tsai}, complete proofs of the above 
statements were achieved only in the twentieth century, apparently in the works 
by Golovina and Yaglom \cite{gol} and Steinberg \cite{stei}. Nowadays, many other 
proofs based on different ideas are known. We refer the reader to \cite{tsai} 
for a discussion of some of them.

Another problem related to graphs of this kind was posed by Eberhard in \cite{eber}: 
given a $3$-regular polyhedron $G$ such that its face degrees are all divisible 
by $3$, can it have an odd number of faces? Again, this simple-looking question 
was answered in the negative only in the twentieth century by Motzkin \cite{mot} 
and, via a different approach, by Kotzig \cite{kot}. The following more general 
dual result was proved by Gr\"unbaum \cite{gru} and, using another method, by Fisk \cite{fisk2}: 
let $G$ be a spherical triangulation such that the degrees of all its vertices 
are divisible by a number $k \in \{3,4,5\}$. Then the number of vertices $v$ 
of $G$ satisfies the following congruences:
\begin{align*}
\text{if } k = 3, & \quad \text{then }v \equiv 0 \pmod{2}, \\
\text{if } k = 4, & \quad \text{then }v \equiv 2 \pmod{4}, \\
\text{if } k = 5, & \quad \text{then } v\equiv 2 \pmod{10}.
\end{align*}

Our first result extends and refines the aforementioned findings to the following class of graphs. 
We say that a plane graph is \textit{uniform of type $(k,l)$} if all vertex degrees of $G$ are divisible by $k$ and all face degrees of $G$ are divisible by $l$. 
The Euler formula easily implies that, up to interchanging $k$ and $l$, the only possible pairs $(k,l)$ are $(2,r)$ with $r \geq 2$, $(3,3)$, $(3,4)$, and $(3,5)$. 
For a uniform graph of type $(k,l)$, we let $n(k,l)$ denote the number of edges of the unique Platonic graph that is uniform of type $(k,l)$. Furthermore, we denote by $v(k,l)$ and $f(k,l)$ the number of vertices and faces of this Platonic graph, respectively.  

In the above notation, we prove the following statement.

\begin{theorem} \label{t1}  
Let $G$ be a uniform graph of type $(k,l)$ with $n$ edges.
Then the following statements hold:
\begin{enumerate}[label={\rm \arabic*)}]
    \item The number 
    \[
    s=\frac{n}{n(k,l)}
    \]
    is an integer.   
    \item The graph $G$ is $v(k,l)$-vertex-colorable and $f(k,l)$-face-colorable. 
\end{enumerate}
Furthermore, there exist colorings as above such that the sum of the degrees of all vertices of any single color is equal to $ks$, and the sum of the degrees of all faces of any single color is equal to $ls$.
\end{theorem}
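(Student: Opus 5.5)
The plan is to pass to Belyi functions and reduce everything to a factorization through the Belyi function of the Platonic graph. Let $\beta:\mathbb{CP}^1\to\mathbb{CP}^1$ be the Belyi function of $G$, normalized so that the vertices of $G$ are $\beta^{-1}(0)$, the midpoints of the edges are $\beta^{-1}(1)$, and the face centers are $\beta^{-1}(\infty)$. Then $\deg\beta=2n$. The local multiplicity of $\beta$ at a vertex (resp. face center) equals its degree, and all multiplicities over $1$ equal $2$. By uniformity, every multiplicity of $\beta$ over $0,1,\infty$ is divisible by $k,2,l$ respectively.

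Let $\theta_{k,l}$ be the Belyi function of the Platonic graph of type $(k,l)$, of degree $2n(k,l)$. Since $1/k+1/2+1/l>1$, this function is the quotient map $\mathbb{CP}^1\to\mathbb{CP}^1/\Gamma$ by a finite rotation group $\Gamma$ (dihedral when $k=2$, polyhedral otherwise). Equivalently, it is the universal orbifold covering of the spherical orbifold $\mathcal O$ with signature $(k,2,l)$ at $0,1,\infty$.

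The key step, and the main obstacle, is the factorization lemma:
\[
\beta=\theta_{k,l}\circ h \qquad \text{for some rational function } h.
\]
The divisibility conditions say exactly that $\beta$ is an orbifold morphism from $\mathbb{CP}^1$ to $\mathcal O$. The orbifold structure on the source is $\nu'(z)=\nu(\beta(z))/\mathrm{mult}_z\beta$, with $\nu'=1$ everywhere. So $\beta$ lifts to the universal orbifold cover: on the punctured sphere, the monodromy of $\beta$ restricted to $\pi_1^{orb}$ is killed by the normal subgroup defining $\theta_{k,l}$, and the lift extends holomorphically across the punctures. I would give a self-contained argument along these lines, via path lifting away from $\beta^{-1}\{0,1,\infty\}$ plus removable singularities. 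Alternatively, one can note that locally near a point of $\beta^{-1}(0)$ the branches of $\theta_{k,l}^{-1}\circ\beta$ are single-valued because $k$ divides the multiplicity.

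Given the factorization, part 1) follows from $2n=\deg\beta=\deg h\cdot 2n(k,l)$, so $s=\deg h\in\mathbb N$.

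For the colorings, the $v(k,l)$ vertices of the Platonic graph are the points of $\theta_{k,l}^{-1}(0)$. Color a vertex $x$ of $G$ by $h(x)$, which lies in $\theta_{k,l}^{-1}(0)$ since $\beta(x)=0$. Faces are colored the same way using $\theta_{k,l}^{-1}(\infty)$.

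To check properness, take an edge of $G$ with ends $x,y$ and midpoint $m$. At $m$ both $\beta$ and $\theta_{k,l}$ have multiplicity $2$, so $h$ is unramified at $m$ and is a local homeomorphism there. Hence $h$ maps the two half-edges at $m$ onto the two distinct halves of the Platonic edge through $h(m)$, and so $h(x),h(y)$ are the two endpoints of that edge. These are distinct, because Platonic graphs have no loops; for $(2,r)$ the graph is an $r$-cycle with $r\ge2$. The dual argument, using that faces adjacent across an edge lie on opposite sides of $h$'s local homeomorphism at $m$, gives a proper face coloring.

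Finally, fix a color $p\in\theta_{k,l}^{-1}(0)$. Then
\[
\sum_{x\in h^{-1}(p)}\deg_G x=\sum_{x\in h^{-1}(p)}\mathrm{mult}_x\beta=\mathrm{mult}_p\theta_{k,l}\sum_{x\in h^{-1}(p)}\mathrm{mult}_x h=k\cdot\deg h=ks.
\]
The same computation over $\infty$ gives $ls$ for faces.
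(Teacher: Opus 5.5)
Your proposal is correct and follows the paper's proof. Your factorization $\beta=\theta_{k,l}\circ h$ is the paper's Theorem~\ref{41}, obtained via Theorem~\ref{h1} by the same local-monodromy and removable-singularity argument you sketch; part 1) is then the degree count, the colorings are pulled back along $h$, and the color sums come from the chain rule. The only difference is that you prove properness by noting $h$ is unramified at edge midpoints, which is a slightly more explicit form of the paper's Lemma~\ref{hp}, since it rules out an edge folding onto a single half-edge when both endpoints have the same image.
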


The first statement of the theorem implies the above congruences. Indeed, the number of vertices $v$ in a triangulation is related to the number of edges $n$ by the formula $v = 2 + n/3$. Thus, for instance, for $k=5$, recalling that the number of edges of the icosahedron is $30$, Theorem~\ref{t1} implies that $v = 2 + 10i$ for some $i \in \mathbb{N}$. However, the assumptions of Theorem~\ref{t1} are more general and also cover the case of type $(k,l) = (2,r)$ with $r \ge 2$. In particular, in the last case it implies that the number $n$ is divisible by $r$, while the handshaking lemma for faces implies only that $2n$ is divisible by $r$.

The second statement of the theorem provides a stronger version of the result concerning the vertex $3$-coloring of regular graphs of type $(2,3)$, ensuring not only the existence of a coloring, but also that its color distribution is uniform.

As noted above, our approach involves Belyi functions, and the complex-analytic counterpart of Theorem \ref{t1} is that a Belyi function $\beta$ of a uniform graph of type $(k,l)$ factors through the Belyi function $\beta_{k,l}$ of the unique Platonic graph of type $(k,l)$, that is, $\beta = \beta_{k,l} \circ \psi$ for some rational function $\psi$. Once this factorization is established, Theorem \ref{t1} readily follows from the basic algebraic and geometric properties of rational functions.

The next class of graphs considered in this paper consists of graphs whose vertex degrees are divisible by $k$ and whose face degrees are divisible by $l$, with at most two exceptions. These exceptions may occur among vertices or faces: there may be two exceptional faces, two exceptional vertices, or one exceptional vertex and one exceptional face. We shall refer to such graphs as $1$-uniform or $2$-uniform graphs of type $(k,l)$, depending on the total number of exceptions. We note that the parameters $k$ and $l$ for such graphs satisfy the same restrictions as for ordinary uniform graphs.

The first result concerning this class of graphs was obtained by Gr\"unbaum in 1967 \cite{gru}, who proved that if $G$ is a connected $3$-regular plane graph, and all but exactly $e \geq 1$ of its faces have degrees divisible by some $k \in \{2,3,4,5\}$, then $e$ cannot be equal to $1$. Moreover, if $e=2$, the two exceptional faces cannot share an edge. We note that Gr\"unbaum's result implies the non-existence of a fullerene  with 12 pentagons and only one hexagon (see also papers \cite{as}, \cite{buch}, \cite{deza15} employing alternative approaches to the proof of the latter fact). 

Essentially the same result as Gr\"unbaum's theorem, but in its dual form, was proved nearly fifty years later by Izmestiev \cite{izm}, who was apparently unaware of Gr\"unbaum's work. Namely, Izmestiev showed that if a triangulation of the sphere has exactly two vertices whose degrees are not divisible by some $k \in \{2,3,4,5\}$, then these vertices are not adjacent. He also established the non-existence of a triangulation in which all but exactly one vertex have degrees divisible by such $k$. The proofs in \cite{gru} and \cite{izm} are remarkably different: Gr\"unbaum's proof uses graph-theoretic constructions, whereas Izmestiev's relies on coloring theory and holonomy. In particular, Izmestiev employed a construction from Fisk's paper \cite{fisk2}. He also remarked that his result can be deduced from a lemma in two other papers by Fisk \cite{fisk1, fisk3}, combined with the four-color theorem.

All the above results are special cases of theorems in Malkevitch's book \cite{mal}, which is devoted to $1$- and $2$-uniform graphs of type $(k,l)$ with $k,l \in \{2,3,4,5\}$. Malkevitch showed that, with some exceptions, Gr\"unbaum's theorem remains true for such graphs, including the case where one allows one exceptional vertex and one exceptional face. The proof of these results is quite long and occupies the entirety of the book \cite{mal}. It relies on transformations that change the number of edges in certain faces while preserving their divisibility properties. 

Our second result recovers the theorems above and supplements them with divisibility restrictions on the possible number of edges of $2$-uniform graphs. It also includes the case $(k,l)=(2,r)$ with $r \geq 6$, which has seemingly not been considered previously.

\begin{theorem} \label{t2} 
The following statements hold:
\begin{enumerate}[label={\bf \Roman*.}]
\item There exists no $1$-uniform graph of any type.

\item If $G$ is a $2$-uniform graph of type $(k,l)$ of degree $n$ with two exceptional faces of degrees $u$ and $v$, then:
\begin{enumerate}[label={\rm \arabic*)}]
\item The equality $$\gcd(u,l) = \gcd(v,l)$$ holds.   
\item If $t$ is the number defined by the previous equality, then the number 
$$s = \frac{n}{n(k,l)}\cdot \frac{l}{t}$$  
is an integer.   
\item The exceptional faces are not adjacent unless $k=2$. 
\end{enumerate}

\item If $G$ is a $2$-uniform graph of type $(k,l)$ with $n$ edges having one exceptional vertex of degree $u$ and one exceptional face of degree $v$, then: 
\begin{enumerate}[label={\rm \arabic*)}]
\item The equality $k=l=3$ holds. 
\item The number $n$ is even.
\item The exceptional vertex is not incident to the exceptional face.
\end{enumerate}
\end{enumerate}
\end{theorem}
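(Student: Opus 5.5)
We would argue through Belyi functions and a lifting argument for orbifold coverings, as in Theorem~\ref{t1}. Let $G$ have $n$ edges and take its Belyi function $\beta$ of degree $n$, normalized so that the preimages of $0$ are the vertices, with local degrees equal to half the vertex degrees (using the bipartite/``clean'' dessin convention, $\beta=4\gamma(1-\gamma)$ form). The poles are the face centers, with multiplicities equal to half the face degrees, and all points over $1$ have multiplicity $2$. Concretely, the clean dessin gives a degree $2n$ function with orders $\deg(v)$ over $0$, $\deg(f)$ over $\infty$, and $2$ over $1$.

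Let $\beta_{k,l}$ be the Belyi function of the Platonic graph of type $(k,l)$. This is the quotient map $\mathbb{CP}^1\to\mathbb{CP}^1/\Gamma$ for the finite triangle group $\Gamma$ with signature $(k,2,l)$. The uniformity condition says exactly that $\beta$ is compatible with the orbifold structure $\mathcal O_{k,l}$ on the target: the cone points are $0,1,\infty$ of orders $k,2,l$, and every preimage of a cone point has local degree divisible by the cone order.

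\textbf{Part I.} Suppose there is exactly one exceptional point, say a vertex $p$ over $0$. Puncture the sphere at $p$. Then $\beta$ restricted to $\mathbb{CP}^1\setminus\beta^{-1}$ of the cone points is an orbifold covering of $\mathcal O_{k,l}$, except over $0$, where one preimage fails. Equivalently, consider the monodromy representation. The loops around preimages with divisible local degree lift to the universal orbifold cover $\mathbb{CP}^1\to\mathcal O_{k,l}$. Lift $\beta$ through $\beta_{k,l}$ on the simply connected surface $\mathbb{CP}^1\setminus\{p\}$. The lift is a holomorphic map $\psi:\mathbb{C}\to\mathbb{CP}^1$ with $\beta=\beta_{k,l}\circ\psi$, and it extends meromorphically over $p$ because $\beta$ is algebraic. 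Hence $\beta=\beta_{k,l}\circ\psi$ with $\psi$ rational. But then the local degree at $p$ equals $\deg_{p}\psi$ times the multiplicity of $\beta_{k,l}$ at $\psi(p)$, which is divisible by $k$, a contradiction.

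\textbf{Parts II and III.} With two exceptional points $p,q$, the complement $\mathbb{CP}^1\setminus\{p,q\}$ has cyclic fundamental group. The same analytic continuation then shows that $\beta$ factors through a cyclic cover. More precisely, after the change of variable $z=\exp$ on $\mathbb{C}^*$ there is a map $\psi$ from $\mathbb{C}^*$, with $p=0$, $q=\infty$, such that $\beta=\beta_{k,l}\circ\psi$ up to the monodromy of $\psi$ around $0$. That monodromy is an element $g\in\Gamma$. Thus $\beta\circ z^m=\beta_{k,l}\circ\tilde\psi$, where $m$ is the order of $g$, or equivalently $\beta=(\beta_{k,l}/\langle g\rangle)\circ\psi$ with $\psi$ rational. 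Here $\beta_{k,l}/\langle g\rangle$ is the quotient of the Platonic map by a cyclic rotation subgroup, fixing two antipodal points $a,b$.

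The local degrees at $p$ and $q$ are then computed from the rotation order $m$ at $a$ and $b$. Since $g$ has the same order at both of its fixed points, $\gcd(u,l)=\gcd(v,l)=t$ when the fixed points are face centers, which gives II.1. In that case $m=l/t$ roughly, and the degree count $2n=\deg\beta=|\Gamma|/m\cdot\deg\psi$ gives II.2. For III, the fixed points of the rotation must be one vertex center and one face center that are antipodal. This happens only for the tetrahedron, giving $k=l=3$ and forcing $n$ even from the degree count. Non-adjacency or non-incidence comes from the fact that $\psi$ maps the exceptional points to antipodal $a,b$, which are non-adjacent in the Platonic graph when $k\neq2$ (for $k=2$ the dihedral structure permits adjacency). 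An edge of $G$ joining them would map under $\psi$ to a path of length one in the Platonic graph.

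The main obstacle is justifying the cyclic-quotient factorization. We must show the lifted map through the orbifold universal cover is rational after quotienting by $\langle g\rangle$, and we must identify the admissible $g$ and their fixed-point types, through a case analysis over $\Gamma$ equal to a dihedral group or to $A_4$, $S_4$, or $A_5$. This is where the cited Hurwitz-realizability results from \cite{pak} would be reproved.
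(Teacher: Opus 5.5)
\textbf{Overall.} Most of your outline is sound and is essentially the paper's argument, recast in terms of the deck group.
\begin{itemize}
\item Part I is the argument of Theorem~\ref{h2}.
\item II.1 is Theorem~\ref{h3}: the monodromies at $p$ and $q$ are $g$ and $g^{-1}$.
\item II.2 and III.2 follow from the degree count in $\beta\circ z^{m}=\beta_{k,l}\circ\tilde\psi$, as in Theorem~\ref{61}, with $m=l/t$ exactly (resp.\ $m=3$).
\item Your fixed-point argument for III.1 is a valid substitute for the paper's use of Theorem~\ref{h4} plus the parity count. A rotation $g\neq 1$ fixing both a vertex and a face center forces an antipodal vertex--face pair, and that occurs only for the tetrahedron.
\end{itemize}

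\textbf{The gap is in II.3.} You claim that $\psi$ sends the two exceptional faces to the \emph{two} antipodal fixed points $a,b$ of $g$. The construction only gives $\tilde\psi(0),\tilde\psi(\infty)\in\{a,b\}$, from $\tilde\psi(\zeta w)=g(\tilde\psi(w))$, and both can equal $a$.

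Here is an example for the cube.
\begin{itemize}
\item Take $a=0$, $b=\infty$, $g(z)=iz$, and write $\beta_{3,4}=\hat\beta\circ z^4$.
\item Set $\beta=\hat\beta\circ\psi$ with $\psi=cz(z-1)^3$, choosing $c$ so that the value of $\psi$ at its critical point $1/4$ is a vertex of $\hat\beta$.
\item This $\beta$ is the clean Belyi function of a $2$-uniform graph of type $(3,4)$ with $12$ edges and face degrees $1,3,4,4,4,4,4$.
\item Its two exceptional faces, at $z=0$ and $z=1$, both lie over $\hat a=0$.
\end{itemize}
For the tetrahedron, and for type $(k,2)$ with $k$ odd, the antipode of a face center is not a face center at all. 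So there this coincident case is the \emph{only} one, and your argument says nothing.

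\textbf{The fix.} The missing step is exactly the first case of the paper's proof. Suppose $\tilde\psi(0)=\tilde\psi(\infty)=a$. Apply Lemma~\ref{hp} to the dual graphs of the clean functions $\beta\circ z^m$ and $\beta_{k,l}$. A lift of the dual edge joining $p$ and $q$ is then sent to a loop at $a$ in the dual of the Platonic graph. No such loop exists, since no edge of a Platonic graph has the same face on both sides.

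With this case added, your antipodal non-adjacency check handles the remaining case $\tilde\psi(0)\neq\tilde\psi(\infty)$, and the argument closes. Because you can stay with the lifted identity, where both sides are clean, the argument also covers type $(k,2)$ with $k\ge 3$ uniformly. The dual of the hosohedron is a cycle, and antipodal digons are non-adjacent for $k\ge 4$. The paper instead handles that type by a separate edge-deletion argument, because its cyclic quotients need not be clean.
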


The proof of Theorem~\ref{t2} is also carried out using functions. However, it becomes more complicated, as the proof of restrictions on $k$ and $l$ involves using some non-trivial non-existence results for rational functions with certain ramification patterns. Another ingredient of the proof is the fact that the Belyi functions for the graphs under consideration factor through simpler Belyi functions, which allows us to obtain restrictions on the number $n$ and the relative positions of exceptional vertices and faces.

The last large family of graphs we consider in this paper consists of graphs where all vertex degrees equal a number $k$ and all face degrees equal a number $l$, with at most two exceptions that, as above, may occur among the vertices or among the faces. We shall call such graphs $1$-Platonic or $2$-Platonic of type $(k,l)$.

In \cite{kei1}, Keith, Froncek, and Kreher considered the question of whether a $1$-Platonic graph exists. Naturally, by Theorem~\ref{t1}, if $G$ is a $1$-Platonic graph of type $(k,l)$ with, say, one exceptional face, then its degree must be divisible by $l$, which however does not preclude the existence of such a graph. The authors noted that the statement regarding the non-existence of these graphs appears without proof in \cite{deza1} by Deza and Sikiri\'{c}, and with only a sketch of a proof in \cite{deza2} by Deza, Sikiri\'{c}, and Shtogrin. The authors of \cite{kei1} proved that a $1$-Platonic graph does not exist under the additional assumption that the graph is $2$-connected. In full generality, the non-existence of $1$-Platonic graphs was subsequently proved in \cite{fro2} by Froncek, Khorsandi, Musawi, and Qiu. The authors of \cite{kei1} also conjectured that if $G$ is a $2$-Platonic graph with two exceptional faces, then the degrees of these faces are equal. This conjecture was proved affirmatively in \cite{jen}.

The following result sums up the above findings and completes them for the case where one exceptional face and one exceptional vertex exist.

\begin{theorem} \label{t3} 
The following statements hold:
\begin{enumerate}[label={\rm \arabic*)}]
\item There exists no $1$-Platonic graph of any type. 
\item If $G$ is a $2$-Platonic graph of type $(k,l)$ with two exceptional faces of degrees $u$ and $v$, then $u = v$.
\item If $G$ is a $2$-Platonic graph of type $(k,l)$ with one exceptional vertex of degree $u$ and one exceptional face of degree $v$, then either $k = l = 3$ and $u = v$,  or $G$ is a uniform graph of type $(k,l)$ and $u/k = v/l$. 
\end{enumerate} 
\end{theorem}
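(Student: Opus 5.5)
Every $1$- or $2$-Platonic graph is in particular $1$- or $2$-uniform of the same type $(k,l)$. So the plan is to reduce everything to Theorem~\ref{t2}, using Theorem~\ref{t1} in the residual uniform case, and to exploit the rigidity that \emph{equality} of degrees gives through Euler's formula.

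\textbf{Part 1.} It follows at once from part I of Theorem~\ref{t2}. A $1$-Platonic graph is $1$-uniform, because its unique exceptional vertex or face either has degree divisible by the relevant $k$ or $l$ or not. If it is divisible, the graph is uniform. Then its Belyi function is $\beta=\beta_{k,l}\circ\psi$, and all but one point of each fibre has the exact multiplicity $k$ (resp.\ $l$). Hence $\psi$ is unramified over all preimages except one point, and by Riemann--Hurwitz a rational function ramified over a single point has degree $1$. So $G$ would itself be Platonic, contradicting the existence of an exception.

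\textbf{Part 2.} Let $G$ have $n$ edges, all vertices of degree $k$, and all faces of degree $l$ except two faces of degrees $u,v$. Euler's formula gives
\[
\frac{2n}{k}-n+\frac{2n-u-v}{l}+2=2,
\]
so $u+v$ is determined by $n$. The idea is to use the factorization from the proof of Theorem~\ref{t2}. There the Belyi function factors as $\beta=\theta\circ\psi$, where $\theta$ is a cyclic/dihedral-type cover through which $\beta_{k,l}$ or a related function passes. The exceptional faces are the poles of $\psi$ lying over the two poles of $\theta$, which are exchanged by a symmetry. Since all other vertices and faces have degree exactly $k$ or $l$, $\psi$ has no ramification except over those two points, so $\psi$ is essentially $z\mapsto z^m$. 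Its two exceptional points then carry equal multiplicity, giving $u=v$. For this route I would invoke the structure established for part II of Theorem~\ref{t2}: $\gcd(u,l)=\gcd(v,l)=t$ together with the factorization through the degree-$l/t$ function.

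\textbf{Part 3.} By Theorem~\ref{t2} III, either the exceptions are non-divisible, in which case $k=l=3$, or both are divisible, in which case $G$ is uniform.

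In the uniform case we write $\beta=\beta_{k,l}\circ\psi$. All fibres except over one vertex-point and one face-point are unramified for $\psi$. Riemann--Hurwitz with two branch values forces $\psi$ to be conjugate to $z^m$ with full ramification at both points. Hence $u=km$ and $v=lm$, that is, $u/k=v/l$.

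In the case $k=l=3$ with non-divisible exceptions, I would use the factorization for type $(3,3)$ from the proof of III. There $\beta$ factors through a degree-$2$-type map, which is what gave $n$ even. By the same rigidity argument the remaining map has only two branch points, so it is a power map, and the exceptional vertex and face correspond under the symmetry of the tetrahedral structure, giving $u=v$.

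\textbf{Main obstacle.} I expect the hardest step to be precisely these non-uniform factorizations in Parts 2 and 3. The key point is showing that once all non-exceptional degrees are exactly $k$ and $l$, the intermediate function has just two branch values. The equality then follows from the fact that a rational map with two branch values is $z^m$ up to Möbius transformations.
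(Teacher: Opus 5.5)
Your Part~1 and your uniform subcases follow the paper. You factor $\beta=\beta_{k,l}\circ\psi$ by Theorem~\ref{41}, and use that the non-exceptional degrees are \emph{exactly} $k,2,l$ to see that $\psi$ can ramify only at the exceptional points. You then conclude that $\psi$ has degree $1$ (one exception) or is a power map (two exceptions).

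The gap is in the non-uniform case of Part~2. You factor $\beta=\theta\circ\psi$ with $\theta=\beta_{G_{k,l}}$, which is the factorization of Theorem~\ref{62} used for non-adjacency in Theorem~\ref{t2}. For $(3,4),(4,3),(3,5),(5,3)$ your rigidity step is sound: the critical values of $\psi$ lie among the two simple poles of $\theta$, so $\psi$ is a power map with one-point fibres there and $u=v=\deg\psi$.

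However, Theorem~\ref{62} is stated and proved only for $k,l\in\{3,4,5\}$, while part~2) concerns every Platonic pair. For type $(k,2)$ the proof of Theorem~\ref{t2} you invoke contains no factorization at all; it is an edge-deletion argument. For dihedral types the paper has a quotient factorization only in the bicolored form of Theorem~\ref{72}, where $\hat{\beta}$ is in general not clean. So type $(r,2)$ with $r\ge 3$ is not covered by what you wrote. Type $(2,r)$ is trivial, since $G$ is then a cycle.

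Even within $\{3,4,5\}$ your description fails for $(3,3)$. There $G_{3,3}$ has one exceptional vertex and one exceptional face, so both exceptional poles of $\beta$ must map to the \emph{single} simple pole of $\theta$. Your argument then yields non-existence, not two poles ``exchanged by a symmetry''.

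The paper avoids all of this by using the diagram of Theorem~\ref{61} instead of Theorem~\ref{62}. Place the exceptional poles at $0$ and $\infty$. Then $\beta\circ z^{l/t}$ is the clean Belyi function of a uniform graph for \emph{every} Platonic pair, so $\beta\circ z^{l/t}=\beta_{k,l}\circ\psi$. Exactness of the other degrees makes $\psi$ unramified off $\{0,\infty\}$, hence $\psi=\mu\circ z^s$. Comparing multiplicities at $0$ and $\infty$ gives $u\cdot l/t=sl=v\cdot l/t$. Your phrase about ``the factorization through the degree-$l/t$ function'' points this way, but your $\beta=\theta\circ\psi$ is not that diagram.

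Three smaller points should also be fixed:
\begin{itemize}
\item Treat the uniform subcase of Part~2 explicitly: both exceptional poles lie over poles of $\beta_{k,l}$, so $u=v=ls$.
\item Exclude the mixed case (one exception divisible, one not) by part~I of Theorem~\ref{t2}.
\item Your non-uniform Part~3 via Theorem~\ref{62}(2) does work, but $u=v$ does not come from a tetrahedral symmetry. It holds because $\beta_{G_{3,3}}$ has multiplicity $1$ at both its exceptional zero and its exceptional pole, and $\psi$ is totally ramified over both.
\end{itemize}
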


The proof of Theorem~\ref{t3} is obtained by modifying the arguments from the proofs of Theorems~\ref{t1} and~\ref{t2}. In fact, our proof yields much more than merely the statement of the theorem, providing a simple constructive description of Belyi functions for $2$-Platonic graphs. This complements related results from papers \cite{deza2, fro3, jen}, where the classification of $2$-Platonic graphs was studied in the case of two exceptional faces.

This paper is organized as follows. In Section 2, we review the correspondence between plane graphs and Belyi functions, which links graph existence questions to the Hurwitz existence problem. In Section 3, within the framework of this problem, we reprove several results from \cite{pak} concerning the realizability of ramification patterns of rational functions. Specifically, Theorems~\ref{h1}--\ref{h6}, taken together, are equivalent to Theorem~1 and Theorem~2 from \cite{pak}. Nevertheless, the formulations and proofs presented in this paper are substantially less technical than those in \cite{pak}. Namely, rather than using tools and results related to orbifolds and fiber products, we rely on standard results from complex analysis concerning analytic continuation and monodromy.

In Section 4, we apply the results of Sections 2 and 3 to uniform graphs and prove Theorem~\ref{t1}. In Section 5, we prove the non-existence of $1$-uniform and $1$-Platonic graphs. In Section 6, we study $2$-uniform and $2$-Platonic graphs and, in particular, prove Theorems~\ref{t2} and~\ref{t3}. In Section 7, we solve the realization problem for {\it bicolored} $2$-Platonic graphs, thereby settling the classical Hurwitz existence problem for the corresponding class of Belyi functions. Note that Theorem~\ref{62} and Theorem~\ref{72} in Sections 6 and 7 are also special cases of the findings in \cite{pak} (namely, Theorem~3), but their formulations and proofs are given in more constructive terms, illustrating the applications of the general result of \cite{pak}.

Finally, in Section 8, we discuss several problems that may serve as natural next steps for further research.

\section{Correspondence between graphs and Belyi functions}
The theory of ``dessins d'enfants'', originating from the work of Alexander Gro\-the\-ndieck \cite{gro}, connects the topology of surfaces, algebraic geometry, and Galois theory. A \textit{dessin d'enfant} is defined as a 2-cell embedding of a bicolored graph into a compact oriented surface. Every such object uniquely determines, up to isomorphism, a \textit{Belyi pair} consisting of a Riemann surface $X$ representing an algebraic curve defined over $\overline{\mathbb{Q}}$, together with a meromorphic function $\beta: X \to \mathbb{C}\mathbb{P}^1$ ramified only over $\{0, 1, \infty\}$. This correspondence enables the absolute Galois group $\text{Gal}(\overline{\mathbb{Q}}/\mathbb{Q})$ to act naturally on dessins, which was one of the primary motivations for studying this theory. 

In this paper, we require only the topological aspect of the dessins d'enfants  theory, and only in a very special case concerning the correspondence between plane graphs and Belyi functions. For the reader's convenience, in this section, after providing the necessary definitions, we recall how this correspondence is constructed, as it is essential for the entirety of this work. We also provide a sketch of the proof of this correspondence, referring the reader for more details to any  introduction to the theory of dessins d'enfants~--- for example, the monograph~\cite{lz}. Note that any embedding of a connected graph into the sphere is well known to be a $2$-cell embedding, and thus falls under the definition of a dessin d'enfant given above. Thus, in the spherical case, dessins d'enfants are simply connected plane graphs, and we will use the latter term.

The correspondence between connected plane graphs and Belyi functions has two versions: one for bicolored graphs and another for ordinary graphs. We start with a description of the first version, where the primary topological objects are connected bicolored graphs embedded in the sphere (where as above, multiple edges and loops are allowed). We consider these objects up to an orientation-preserving isotopy equivalence that preserves the vertex coloring. 
The corresponding analytical objects are Belyi functions, i.e., rational functions whose critical values are contained in the set $\{0, 1, \infty\}$. Two such functions, $\beta$ and $\tilde{\beta}$, are considered equivalent if there exists a M\"{o}bius transformation $\mu$ such that $\beta = \tilde{\beta} \circ \mu$.

From a Belyi function $\beta$ of degree $n$, one can construct a connected bicolored graph $G_\beta$ with $n$ edges as follows. Consider the segment $[0, 1]$ as a bicolored graph with a black vertex at $0$ and a white vertex at $1$. Since the open interval $(0, 1)$ contains no critical values of $\beta$, the preimage $\beta^{-1}([0,1])$ consists of $n$ arcs that can intersect only at the preimages of $0$ and $1$. Thus, by treating the preimages of $0$ and $1$ as vertices and the connected components of $\beta^{-1}((0,1))$ as edges, we obtain a graph $G_\beta$ with $n$ edges embedded in the sphere (see Fig.~\ref{f1}, which represents a graph $G_{\beta}$ for some Belyi function $\beta$ of degree 9).  
Note that the degree of any vertex $x$ of $G_\beta$ coincides with the multiplicity $k$ of the corresponding point under $\beta$, since $\beta$ behaves like a power map $z \mapsto z^k$ near $x$.  

\begin{figure}[h]
    \centering
    \includegraphics[width=0.8\textwidth]{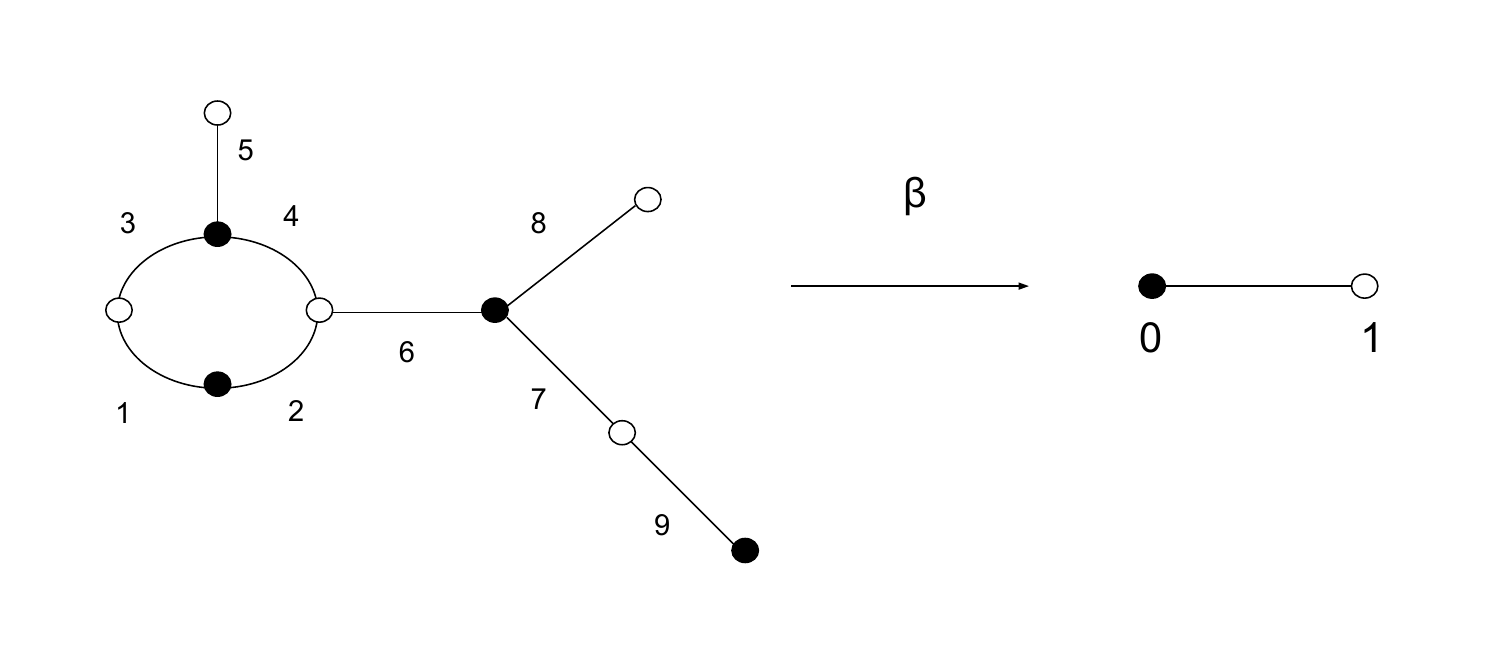}
    \caption{}
    \label{f1}
\end{figure}

It is easy to see that $G_\beta$ naturally inherits a bicolored structure by coloring all preimages of $0$ black and all preimages of $1$ white. Furthermore, since the complement $D = \mathbb{CP}^1 \setminus \big([0,1] \cup \{\infty\}\big)$ is topologically a punctured disk, where the boundary of the disk wraps twice around the segment, and the restriction 
\[
\beta: \beta^{-1}(D) \longrightarrow D
\]
is a covering map, the faces of $G_\beta$ are in bijective correspondence with the poles of $\beta$, and the multiplicity of each pole is equal to half the degree of the corresponding face. Finally,  it follows from the Riemann--Hurwitz formula that the number of vertices of $G_\beta$ plus the number of faces equals the number of its edges plus $2$. Thus, the graph $G_{\beta}$ is connected.

\bt \l{b1} 
The map $\beta\rightarrow G_{\beta}$ descends to a bijective correspondence between the equivalence classes of Belyi functions of degree $n$ and the equivalence classes of connected bicolored plane graphs with $n$ edges.
\et
\noindent{\it Sketch of the proof.} 
It is clear that equivalent Belyi functions lead to equivalent graphs. Thus, we must essentially show that any connected bicolored plane graph can be realized by a suitable Belyi function. This is achieved via the {\it monodromy representation}, as explained below.

Let $\beta$ be a Belyi function of degree $n$. Fix a base point $z_0 \in (0,1)$ and denote by $S = \beta^{-1}(z_0)$ the set of its $n$ preimages under $\beta$. The fundamental group \linebreak $\pi_1(\mathbb{C}\mathbb{P}^1 \setminus \{0,1,\infty\}, z_0)$ is generated by three loops $\gamma_0$, $\gamma_1$, and $\gamma_{\infty}$ encircling $0$, $1$, and $\infty$, respectively. Lifting these loops yields a triple of permutations $\sigma_0$, $\sigma_1$, and $\sigma_{\infty}$ acting on the set $S$ and satisfying the relation
\be \l{sa}
\sigma_0 \sigma_1 \sigma_\infty = \mathrm{id}.
\ee 
Conversely, by the general theory of coverings and Riemann's existence theorem, any permutations $\sigma_0$, $\sigma_1$, and $\sigma_\infty$ acting transitively on an $n$-element set, satisfying~\eqref{sa}, and fulfilling the genus-zero condition
\[
c(\sigma_0) + c(\sigma_1) + c(\sigma_\infty) = n + 2,
\]
where $c(\sigma)$ denotes the number of disjoint cycles of $\sigma$, uniquely determine a rational function $\beta: \mathbb{C}\mathbb{P}^1 \to \mathbb{C}\mathbb{P}^1$ ramified only over $\{0,1,\infty\}$ whose monodromy permutations coincide with $\sigma_0$, $\sigma_1$, and $\sigma_\infty$.

Given a Belyi function $\beta$, the permutations $\sigma_0$, $\sigma_1$, and $\sigma_{\infty} = (\sigma_0\sigma_1)^{-1}$ can be read directly from the graph $G_{\beta}$ as permutations acting on the edges of $G_{\beta}$. Indeed, $\sigma_0$ and $\sigma_1$ correspond to the permutations obtained by tracking the edges of $G_\beta$ counterclockwise around the white and black vertices, respectively. 
For example, for the graph shown in Fig.~\ref{f1}, these permutations are 
\[
\sigma_0=(12)(345)(678)(9),\quad \sigma_1=(13)(264)(5)
(79)(8).
\]
Note that the cycles of $\sigma_0$, $\sigma_1$, and $\sigma_{\infty}$ correspond to the white vertices, black vertices, and faces of $G_{\beta}$, respectively.

Since the above permutations acting on the edges of $G$ are defined solely by the geometry of $G$, this allows one to define them starting from any connected bicolored plane graph $G$. These permutations, in turn, yield a Belyi function with the matching monodromy representation, defined up to equivalence by M\"{o}bius precomposition. Therefore, for any graph $G$ we can obtain a Belyi function $\beta$ such that $G_{\beta}$ is equivalent to $G$. \qed

It is worth noting that, for a given graph, the explicit computation of the corresponding Belyi function is a highly non-trivial task, often requiring sophisticated theoretical and numerical methods. In the present paper, however, we do not need to construct these functions explicitly, as our arguments essentially rely merely on their existence.

Although not every graph is bipartite, this poses no obstacle to applying the theory of dessins d'enfants to arbitrary graphs. Indeed, with any ordinary graph with $n$ edges we can associate a bicolored graph with $2n$ edges by coloring all original vertices black and placing a white vertex at the midpoint of each edge (see Fig. \ref{f2}). The analytical object corresponding to this new graph is called a \textit{clean} Belyi function, and is defined as a Belyi function $\beta$ such that the multiplicity of any point in the preimage $\beta^{-1}(1)$ is exactly $2$. As above, we consider graphs up to isotopy equivalence and clean Belyi functions up to equivalence by M\"{o}bius precomposition.

\begin{figure}[h]
    \centering
    \includegraphics[width=0.8\textwidth]{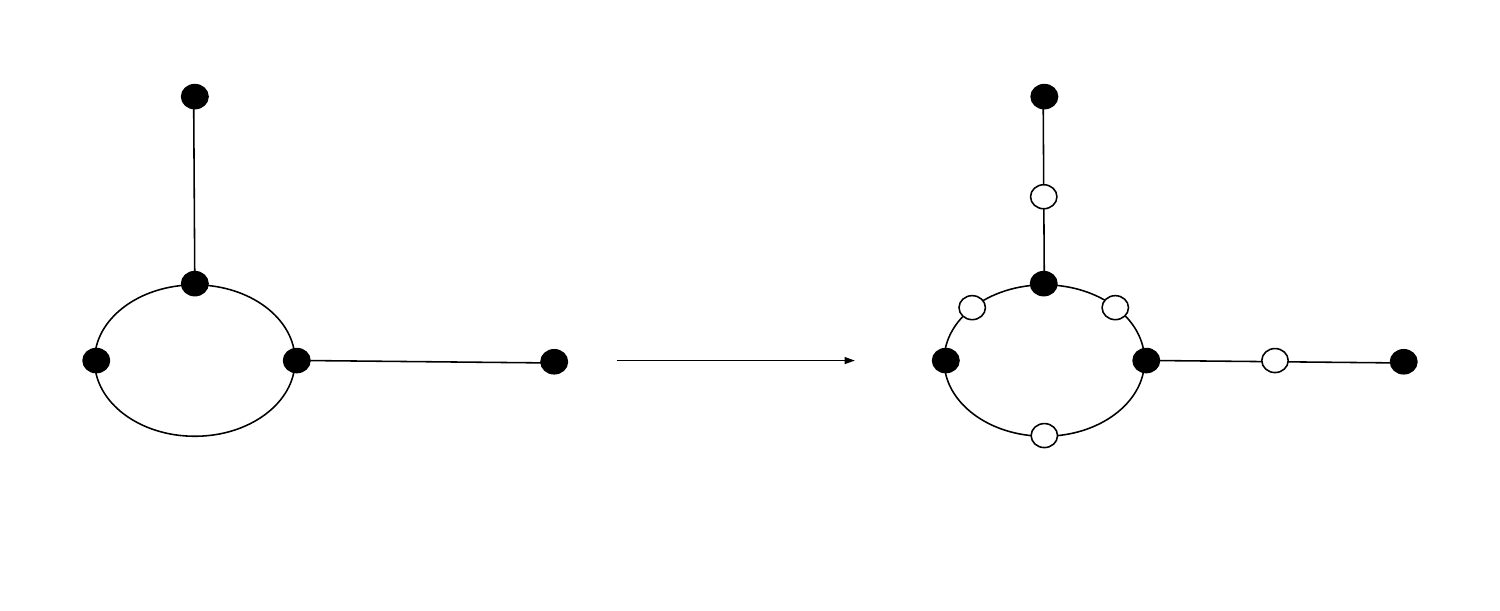}
    \caption{}
    \label{f2}
\end{figure}

Any clean function $\beta$ defines an \textit{ordinary} graph $G_{\beta}$ as follows. Considering the preimage of the segment $[0, 1]$ under a clean Belyi function $\beta$, we obtain a bicolored graph where every white vertex has degree $2$ and lies at the midpoint of an arc connecting two black vertices. By ``forgetting'' these white vertices, we obtain an ordinary graph $G_{\beta}$. Note that in distinction with biclored graphs the degree  a clean Belyi function $\beta$ corresponding to an ordinary graph with $n$ edges is $2n$, while the degree of each face of the graph  is equal to the multiplicity of the corresponding pole of $\beta$.

Theorem \ref{b1} implies the following statement.  

\bt \l{b2} 
The map $\beta \to G_{\beta}$ descends to a bijective correspondence between the equivalence classes of clean Belyi functions of degree $2n$ and the equivalence classes of connected plane graphs with $n$ edges. \qed
\et

Note that if $\beta$ is a clean Belyi function for a connected plane graph $G$, then the clean Belyi function for the dual graph is simply $1/\beta$. Indeed, clearly 
\[
(1/\beta)^{-1}([0, 1]) = \beta^{-1}([1, \infty]).
\]
On the other hand, as is easy to see, the preimage $\beta^{-1}([1, \infty])$ precisely represents the dual graph (see Fig. \ref{f3}).

\begin{figure}[h]
    \centering
    \includegraphics[width=0.9\textwidth]{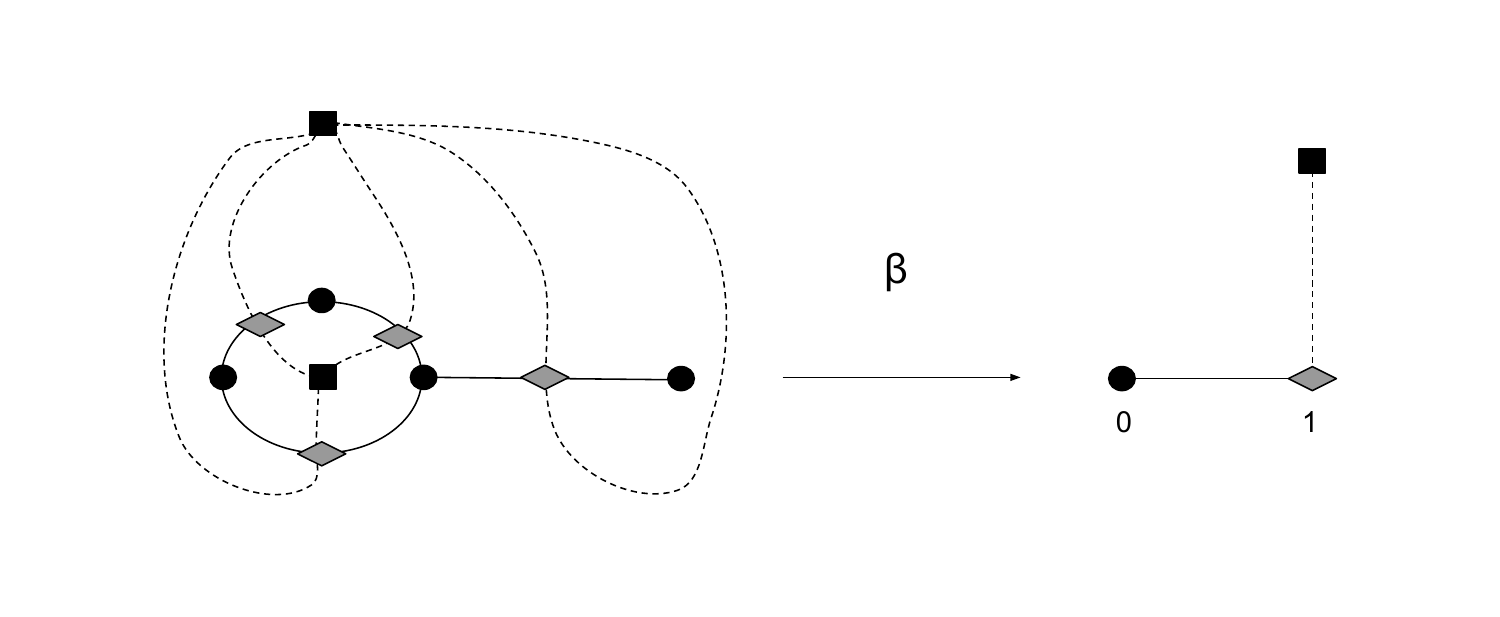}
    \caption{}
    \label{f3}
\end{figure}

Since we use both versions of the correspondence between graphs and Belyi functions in this paper, to avoid any confusion, we will always explicitly state whether the Belyi function under consideration is clean or the graph is bicolored. On the other hand, since all considered graphs are connected and plane, we will usually omit the corresponding adjectives.

With each bicolored plane graph $G$ with $n$ edges, one can associate three partitions of the integer $n$,
\begin{equation} \label{e}
\Pi_1 = (\pi_{1,1}, \dots, \pi_{1,p}), \quad \Pi_2 = (\pi_{2,1}, \dots, \pi_{2,q}), \quad \Pi_3 = (\pi_{3,1}, \dots, \pi_{3,r}),
\end{equation}
as follows: the first partition is the list of the black  vertex degrees, the second partition is the list of the white vertex degrees, and the third partition is the list of the face degrees divided by two. By Euler's formula, the parameters $p$, $q$, and $r$ are related by the equality 
\begin{equation} \label{eu}
p + q + r = n + 2.
\end{equation}

In a similar manner, one can associate three partitions $\Pi_1$, $\Pi_2$, and $\Pi_3$ with any Belyi function $\beta$, where the first, second, and third partitions are the lists of the multiplicities of $\beta$ at the points of $\beta^{-1}(0)$, $\beta^{-1}(1)$, and $\beta^{-1}(\infty)$, respectively. By the construction of the bicolored graph $G_\beta$ from the Belyi function $\beta$, these sets of partitions clearly coincide for $\beta$ and $G_{\beta}$. Thus, the existence or non-existence of a graph with a given triple of partitions is equivalent to the existence or non-existence of the corresponding Belyi function.

We will call any collection $\Pi=(\Pi_1,\Pi_2,\Pi_3,n)$, where $\Pi_1,\Pi_2,\Pi_3$ are partitions of $n$ as in~\eqref{e} satisfying~\eqref{eu}, a \textit{passport}. 
Given a bicolored graph or a Belyi function, the associated set of partitions will be called the \textit{passport of the graph} or of the Belyi function.
We will say that a passport $\Pi$ is \textit{realizable} or \textit{unrealizable} depending on whether there exists a bicolored graph $G$ or, equivalently, a Belyi function $\beta$ such that $\Pi$ is the passport of $G$ or $\beta$.

For example, we say that the passport of the bicolored graph in Fig.~\ref{f1} is 
$$((3,3,2,1), (3,2,2,1,1), (7,2), 9),$$ 
and that the passport 
$$((2,2), (2,2), (3,1), 4)$$ 
is not realizable (we leave the verification of this to the reader).

%Наконец для графа (не двукрашенного) удобно определить паспорт как псапорт соответсвенно двуукрашенно с удаленным разбиением \Pi_2 состоящим из двоек. Гапрмер для графа на левой части фиг \ref{f2} its passport is
%$$(3,3,2,1,1), (7,3), 10).$$

The problem of describing the realizable passports of Belyi functions is a special case of the more general Hurwitz existence problem. In its general setting, one considers a larger number of partitions that potentially correspond to the ramification patterns of a rational function (or, more generally, a covering map between surfaces), with parameters governed by the Riemann--Hurwitz formula. 
We note that with a function having more than three critical values, one can 
still associate a combinatorial object generalizing a bicolored graph, known 
as a \textit{constellation} (see, e.g.,~\cite{lz}). However, in this paper, 
we do not consider these objects.

In this work, we mainly consider ordinary graphs. As noted above, the set of such 
graphs with $n$ edges naturally embeds into the set of bicolored graphs with 
$2n$ edges. We will always think of our graphs as the corresponding bicolored ones, 
even though we will use the term vertex only for black vertices and 
draw 
figures without white midpoint vertices. In particular, we will associate with 
any ordinary graph the passport of the corresponding bicolored graph (whose second 
partition always consists of twos) and the clean Belyi function of the latter.

\section{Hurwitz existence problem and monodromy} 
Let $f$ be a rational function such that there exist points $z_1, z_2, z_3 \in \mathbb{C}\mathbb{P}^1$ and integers $k, m, l \geq 2$ satisfying the following conditions: all multiplicities of $f$ at the points of $f^{-1}(z_1)$ are divisible by $k$, all multiplicities of $f$ at the points of $f^{-1}(z_2)$ are divisible by $m$, and all multiplicities of $f$ at the points of $f^{-1}(z_3)$ are divisible by $l$. Then it follows easily from the Riemann--Hurwitz formula that 
\begin{equation} \label{ine} 
\frac{1}{k}+\frac{1}{m}+\frac{1}{l}>1.
\end{equation} 
Moreover, this inequality remains valid even if the conditions above hold for all points in $f^{-1}(\{z_1,z_2,z_3\})$ except for at most two points. 

The solutions to \eqref{ine} are well known: at least one of the integers must be $2$, and if, say, $m=2$, then, up to interchanging $k$ and $l$, the only possible pairs $(k,l)$ are $(2,r)$ with $r \geq 2$, $(3,3)$, $(3,4)$, and $(3,5)$. For brevity, we will say that a triple of integers $k, m, l \geq 2$ is \textit{Platonic} if it satisfies \eqref{ine}. We say that a pair of integers $k, l \geq 2$ is \textit{Platonic} if the triple $(k, 2, l)$ is Platonic.

For every solution to \eqref{ine} and any $z_1, z_2, z_3 \in \mathbb{C}\mathbb{P}^1$, there exists a rational function $f$ such that all multiplicities of $f$ at the points of $f^{-1}(z_1)$ are equal to $k$, all multiplicities of $f$ at the points of $f^{-1}(z_2)$ are equal to $m$, and all multiplicities of $f$ at the points of $f^{-1}(z_3)$ are equal to $l$. We will denote such a function by $\beta_{k,m,l}^{z_1,z_2,z_3}$.

One of the quickest ways to see that such a function exists is via the correspondence between graphs and clean Belyi functions.
Indeed, if $z_1=0$, $z_2=1$, $z_3=\infty$, and $m=2$, this correspondence implies that as $\beta_{k,m,l}^{z_1,z_2,z_3}$ one can take the clean Belyi function corresponding to the unique Platonic graph of type $(k,l)$. Moreover, $\beta_{k,m,l}^{z_1,z_2,z_3}$ is defined in a unique way up to post-composition with a M\"{o}bius transformation, and for different choices of $z_1, z_2, z_3$, the corresponding functions differ by pre-composition with a M\"{o}bius transformation. In particular, we have:
\be \l{deg} \deg \beta_{k,m,l}^{z_1,z_2,z_3}=2n(k,l).\ee

A more classical description of the functions $\beta_{k,m,l}^{z_1,z_2,z_3}$ is as invariant rational functions for the finite automorphism groups $D_{2r}$, $A_4$, $S_4$, and $A_5$ of $\mathbb{C}\mathbb{P}^1$, which serve as the symmetry groups of the Platonic solids of types $(2,r)$, $(3,3)$, $(3,4)$, and $(3,5)$, respectively. As such, they were already computed by Klein in his famous monograph~\cite{klein}.

Since the function $\beta_{k,m,l}^{z_1,z_2,z_3}$ is defined up to postcomposition with a Möbius transformation,  by writing $\beta_{k,m,l}^{z_1,z_2,z_3}$ we usually mean \textit{some} representative of the corresponding equivalence class. However, sometimes we will use this notation for a representative possessing desired properties.

The first result of this section is the following statement.

\bt \l{h1} 
Let $f$ be a rational function. Assume that there exist points $z_1, z_2, z_3$ on $\mathbb{C}\mathbb{P}^1$ and integers $k, m, l \geq 2$ such that: 
\begin{enumerate}[label={\rm \arabic*)}]
  \item All multiplicities of $f$ at the points of $f^{-1}(z_1)$ are divisible by $k$.
  \item All multiplicities of $f$ at the points of $f^{-1}(z_2)$ are divisible by $m$.
  \item All multiplicities of $f$ at the points of $f^{-1}(z_3)$ are divisible by $l$. 
\end{enumerate}
Then there exists a rational function $\psi$ such that $f = \beta_{k,m,l}^{z_1, z_2, z_3} \circ \psi$.   
In particular, $\deg f$ is divisible by $2n(k,l)$.
\et
\begin{proof} Choose a point $z_0$ such that $f(z_0) \notin \{z_1, z_2, z_3\}$. Then, by the Implicit Function Theorem, any branch of the algebraic function \be \l{mul} (\beta_{k,m,l}^{z_1,z_2,z_3})^{ -1} \circ f\ee defines a meromorphic germ at $z_0$, which can be analytically continued along any path avoiding the points in $f^{-1}(\{z_1, z_2, z_3\})$. Fixing one such germ $\psi_{z_0}$, we shall show that it extends to a global meromorphic function $\psi$ on $\mathbb{C}\mathbb{P}^1$, which will serve as the desired rational function.

To analyze the monodromy of the continuation of $\psi_{z_0}$, fix $z \in f^{-1}(z_1)$ and consider a composite loop $\gamma_z = \sigma \cdot \gamma \cdot \sigma^{-1}$ starting at $z_0$, where $\gamma$ is a small loop around $z$ and $\sigma$ is a path connecting $z_0$ to $\gamma$ in $\mathbb{C}\mathbb{P}^1 \setminus f^{-1}(\{z_1, z_2, z_3\})$. As we continue $\psi_{z_0}$ along $\gamma_{z}$, its image $f(\gamma_{z})$ winds around $z_1$ a number of times equal to the multiplicity of $f$ at $z$. Since this multiplicity is divisible by $k$ and the multiplicity of $\beta_{k,m,l}^{z_1,z_2,z_3}$ at any point of $(\beta_{k,m,l}^{z_1,z_2,z_3})^{-1}(z_1)$ is exactly $k$, the lift of the path $\gamma_z$ via \eqref{mul} closes up. 
Thus, analytically continuing the germ $\psi_{z_0}$ along $\gamma_z$ returns it to itself.

Since the same property holds for loops around points of $f^{-1}(z_2)$ and $f^{-1}(z_3)$, we conclude that the monodromy of the analytic continuation of $\psi_{z_0}$ is trivial. Therefore, by the Monodromy Theorem, the germ $\psi_{z_0}$ extends to a single-valued meromorphic function on the punctured sphere $\mathbb{C}\mathbb{P}^1 \setminus f^{-1}(\{z_1, z_2, z_3\})$. Observe now that since $f$ and $\beta_{k,m,l}^{z_1,z_2,z_3}$ are rational, the omitted points cannot be essential singularities for $\psi$. Hence, by the classification of isolated singularities, $\psi_{z_0}$ extends to a global meromorphic function $\psi$ on the entire Riemann sphere $\mathbb{CP}^1$, which must be rational. Moreover, the local equality $$f=\beta_{k,m,l}^{z_1,z_2,z_3} \circ \psi_{z_0}$$ extends globally to the equality $$f = \beta_{k,m,l}^{z_1,z_2,z_3} \circ \psi$$ by the uniqueness of analytic continuation. Finally, it is clear that 
\[
\deg f = \deg \beta_{k,m,l}^{z_1,z_2,z_3} \cdot \deg \psi. \qedhere
\] 
\end{proof}

The following three statements are proved by a modification of the proof of Theorem~\ref{h1}.

\bt \l{h2} 
There exists no rational function $f$ with the property that, for some points $z_1, z_2, z_3 \in \mathbb{C}\mathbb{P}^1$ and integers $k, m, l \geq 2$, the following conditions hold:
\begin{enumerate}[label={\rm \arabic*)}]
  \item All multiplicities of $f$ at the points of $f^{-1}(z_1)$ are divisible by $k$.
  \item All multiplicities of $f$ at the points of $f^{-1}(z_2)$ are divisible by $m$.
  \item All multiplicities of $f$ at the points of $f^{-1}(z_3)$, except for one, are divisible by $l$.
\end{enumerate}
\et 
\pr 
Considering the analytic continuation of the germ $\psi_{z_0}$ defined in the proof of Theorem~\ref{h1} and using the same reasoning, we see that the monodromy of this continuation is trivial at all points of $\mathbb{C}\mathbb{P}^1$, except possibly at the point $z \in f^{-1}(z_3)$ where $\deg_z f = l'$ is not divisible by $l$. Hence, 
 by the Monodromy Theorem, it must be trivial at $z$ as well. 
 
 On the other hand, as we continue $\psi_{z_0}$ along $\gamma_z$ as above, its image $f(\gamma_z)$ winds around $z_3$ exactly $l'$ times. Therefore, the permutation of the branches of \eqref{mul} has the same cyclic structure as the $l'$-th power of the permutation $\sigma$ of the branches of $(\beta_{k,m,l}^{z_1,z_2,z_3})^{-1}$ corresponding to lifting a small loop around $z_3$ under $\beta_{k,m,l}^{z_1,z_2,z_3}$. However, since $\sigma$ consists of cycles of length $l$, its $l'$-th power has no fixed points because $l \nmid l'$. Thus, we arrive at a contradiction.\qed

\bt \l{h3} 
Let $f$ be a rational function. Assume that there exist points $z_1, z_2, z_3$ on $\mathbb{C}\mathbb{P}^1$ and integers $k, m, l \geq 2$ such that: 
\begin{enumerate}[label={\rm \arabic*)}]
  \item All multiplicities of $f$ at the points of $f^{-1}(z_1)$ are divisible by $k$.
  \item All multiplicities of $f$ at the points of $f^{-1}(z_2)$ are divisible by $m$.
  \item All multiplicities of $f$ at the points of $f^{-1}(z_3)$ are divisible by $l$, except for two points with multiplicities $u$ and $v$.
\end{enumerate}
Then $\gcd(u,l)=\gcd(v,l).$
\et
\pr Let $x$  and $y$ be the points in $f^{-1}(z_3)$ where $\deg_{x} f = u$ and  \linebreak $\deg_{y} f = v$. Considering again the analytic continuation of the germ $\psi_{z_0}$ and using the Monodromy Theorem, we conclude that the permutations of the branches of \eqref{mul} induced by the continuations along $\gamma_{x}$ and $\gamma_{y}$ are inverse to each other. This implies that the $u$-th power of the permutation $\sigma$ has the same cycle structure as the $v$-th power of $\sigma$, which yields $\gcd(u,l)=\gcd(v,l).$ \qed 

\bt \l{h4} 
Let $f$ be a rational function. Assume that there exist points $z_1, z_2, z_3$ on $\mathbb{C}\mathbb{P}^1$ and integers $k, m, l \geq 2$ such that: 
\begin{enumerate}[label={\rm \arabic*)}]
  \item All multiplicities of $f$ at the points of $f^{-1}(z_1)$ are divisible by $k$.
  \item All multiplicities of $f$ at the points of $f^{-1}(z_2)$ are divisible by $m$, except for one point with multiplicity $u$.
  \item All multiplicities of $f$ at the points of $f^{-1}(z_3)$ are divisible by $l$, except for one point with multiplicity $v$.
\end{enumerate}
Then $m/\gcd(u,m)=l/\gcd(v,l).$
\et
\pr The proof is similar to the proof of Theorem~\ref{h3}. Namely, let $x \in f^{-1}(z_2)$ and $y\in f^{-1}(z_3)$ be points where $\deg_{x} f=u$ and $\deg_{y} f=v$. Further, let $\sigma_1$ and $\sigma_2$ be the permutations of the branches of $(\beta_{k,m,l}^{z_1,z_2,z_3})^{-1}$ corresponding to lifting a small loop around $z_2$ and $z_3$, respectively, under $\beta_{k,m,l}^{z_1,z_2,z_3}$. Then the Monodromy Theorem implies that the $u$-th power of $\sigma_1$ has the same cycle structure as the $v$-th power of $\sigma_2$, which yields the desired equality. \qed

We complete this section with the following two results, which can be viewed as degenerate cases of the previous ones. 

\bt \l{h5} 
Let $f$ be a rational function. Assume that there exist points $z_1, z_2$ on $\mathbb{C}\mathbb{P}^1$ and an integer $k \geq 2$ such that all multiplicities of $f$ at the points of $f^{-1}(\{z_1,z_2\})$ are divisible by $k$. 
Then there exist a M\"obius transformation $\mu$ and a rational function $\psi$ such that $f = \mu \circ z^k \circ \psi$.
\et

\pr Assuming without loss of generality that $z_1 = 0$ and $z_2 = \infty$, the proof can be obtained in the spirit of the previous ones by considering the analytic continuation of a branch of the algebraic function $(z^k)^{-1} \circ f$. However, the use of analytic continuation is unnecessary in this case, as we can simply represent $f$ in terms of its zeros and poles. 
\qed

\bt \l{h6} 
Let $f$ be a rational function. Assume that there exist points $z_1, z_2$ on $\mathbb{C}\mathbb{P}^1$ and an integer $k \geq 2$ such that all multiplicities of $f$ at the points of $f^{-1}(\{z_1,z_2\})$ are divisible by $k$, except possibly for two points with multiplicities $u$ and $v$. 
Then $\gcd(u,k)=\gcd(v,k).$
\et

\pr As above, the statement is obtained using analytic continuation or elementary reasoning. In fact, assuming that $z_1 = 0$, $z_2 = \infty$, and that the points where the multiplicities of $f$ are exceptional are also $0$ and $\infty$, it is easy to see that a function $f$ satisfies the condition of the theorem if and only if it has the form 
$
f = z^a R(z^n),
$ where $n$ is not divisble by $ a$.
\qed

\section{Uniform graphs}
In this section, we apply the results of the previous two sections to uniform graphs 
and prove Theorem~\ref{t1}. 
We will denote by $\beta_{k,l}$ the clean Belyi function for the unique Platonic graph of type $(k,l)$. 

The following statement is the graph-theoretic counterpart of Theorem~\ref{h1}.

\begin{theorem} \label{41} 
Let $G$ be a uniform  graph of type $(k,l)$ with $n$ edges, and let $\beta$ be its clean Belyi function. Then there exists a rational function $\psi$ such that $\beta = \beta_{k,l} \circ \psi.$ 
In particular, $n$ is divisible by $n(k,l)$.
\end{theorem}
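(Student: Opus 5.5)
The plan is to apply Theorem~\ref{h1} directly to the clean Belyi function $\beta$ of $G$, with the points $z_1=0$, $z_2=1$, $z_3=\infty$ and the integers $k$, $m=2$, $l$. We first translate the uniformity hypothesis into the three divisibility conditions. The points of $\beta^{-1}(0)$ are the (black) vertices of $G$, and the multiplicity of $\beta$ at such a point equals the vertex degree. These degrees are all divisible by $k$, which gives condition 1). Since $\beta$ is clean, every point of $\beta^{-1}(1)$ has multiplicity exactly $2$, which gives condition 2) with $m=2$. Finally, by the discussion preceding Theorem~\ref{b2}, the poles of $\beta$ correspond to the faces of $G$, and the multiplicity of a pole equals the degree of the face in the ordinary graph. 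These are divisible by $l$, which gives condition 3).

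Theorem~\ref{h1} then yields a rational $\psi$ with $\beta=\beta_{k,2,l}^{0,1,\infty}\circ\psi$. It remains to identify $\beta_{k,2,l}^{0,1,\infty}$ with $\beta_{k,l}$. By the discussion in Section~3, when $z_1=0$, $z_2=1$, $z_3=\infty$ and $m=2$, the clean Belyi function $\beta_{k,l}$ of the Platonic graph of type $(k,l)$ is a valid choice of $\beta_{k,2,l}^{0,1,\infty}$. The only point needing care is that this function is defined merely up to post-composition with a Möbius transformation $\mu$.

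This normalization is the one subtle step, and I would handle it as follows. Suppose Theorem~\ref{h1} produced some representative $\mu\circ\beta_{k,l}$. Since $f=\beta$ has fully ramified fibres only over $0,1,\infty$ in the relevant pattern, the representative used in the proof of Theorem~\ref{h1} has fibres over $z_1,z_2,z_3$ with multiplicities exactly $k,2,l$. Hence $\mu$ must map the fibres of $\beta_{k,l}$ over $0,1,\infty$ to those over $0,1,\infty$ respectively. Such a $\mu$ fixes $0,1,\infty$ and is therefore the identity. Alternatively, one can run the proof of Theorem~\ref{h1} directly with $\beta_{k,l}$ in place of $\beta_{k,m,l}^{z_1,z_2,z_3}$: its argument uses only that $\beta_{k,l}$ has all multiplicities exactly $k$, $2$, $l$ over $0$, $1$, $\infty$, and $\beta_{k,l}$ satisfies this by construction. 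Either way, $\beta=\beta_{k,l}\circ\psi$.

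For the divisibility statement, take degrees: $2n=\deg\beta=\deg\beta_{k,l}\cdot\deg\psi=2n(k,l)\deg\psi$ by \eqref{deg}. Therefore $n=n(k,l)\deg\psi$, and $n$ is divisible by $n(k,l)$.
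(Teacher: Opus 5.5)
Your proof is correct and is essentially the paper's proof: you check the three divisibility hypotheses of Theorem~\ref{h1} at $0,1,\infty$ with $m=2$, take $\beta_{k,l}$ itself as the representative of $\beta_{k,2,l}^{0,1,\infty}$, and compare degrees. One caution: your first normalization argument concludes $\mu=\mathrm{id}$, and this is false when $k=l$ or $2\in\{k,l\}$ (for example, $\mu(z)=1/z$ preserves the multiplicity pattern of $\beta_{3,3}$). That argument is also unnecessary, because for fixed $z_1,z_2,z_3$ the function $\beta_{k,2,l}^{0,1,\infty}$ is only determined up to \emph{pre}-composition with a M\"obius map, which is absorbed into $\psi$; your second alternative already settles the point.
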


\begin{proof}
By the correspondence between graphs and Belyi functions, if $\beta$ is a clean Belyi function of $G$, then all multiplicities of $\beta$ at the points of $\beta^{-1}(0)$ are divisible by $k$, all multiplicities of $\beta$ at the points of $\beta^{-1}(1)$ are divisible by $2$, and all multiplicities of $\beta$ at the points of $\beta^{-1}(\infty)$ are divisible by $l$. Therefore, by Theorem~\ref{h1}, the equality 
\[
\beta = \beta_{k,2,l}^{0,1,\infty} \circ \psi = \beta_{k,l} \circ \psi
\]
holds. In particular, 
\[
 2n(k,l)=\deg \beta_{k,l}  \mid \deg \beta = 2n. \qedhere
\]
\end{proof}

Let $\hat{\beta}$ be a clean Belyi function and $\hat{G} = \hat{\beta}^{-1}([0,1])$ the corresponding graph. Assume that $\psi$ is a rational function such that the composition $\beta = \hat{\beta} \circ \psi$ is also a clean Belyi function, and let $G = \beta^{-1}([0,1])$ be the corresponding graph. Then the vertices of $G$ are the preimages of the vertices of $\hat{G}$ under $\psi$. Moreover, the set of poles of $\beta$, identified with the set of faces of $G$, is the preimage under $\psi$ of the set of poles of $\hat{\beta}$, identified with the set of faces of $\hat{G}$. 
Although completely describing the geometry of $G$ starting from $\hat{G}$ and $\psi$ is a non-trivial task, even in the case where $\psi$ itself is a Belyi function (see \cite{marr}), to derive Theorem~\ref{t1} from Theorem~\ref{41}, it is enough to prove the following simple statement.

\begin{lemma}\label{hp} 
Let $\beta$ and $\hat{\beta}$ be clean Belyi functions, and let $G = \beta^{-1}([0,1])$ and $\hat{G} = \hat{\beta}^{-1}([0,1])$ be the corresponding graphs. Assume that there exists a rational function $\psi$ such that $\beta = \hat{\beta} \circ \psi$. Then the following conditions hold:
\begin{enumerate}[label={\rm \arabic*)}]
  \item If $e$ is an edge connecting vertices $a$ and $b$ in $G$, then $\psi(e)$ is an edge connecting vertices $\psi(a)$ and $\psi(b)$ in $\hat{G}$.
  \item If $\gamma$ is a path connecting a vertex $a$ of $G$ to a pole $p$ of $\beta$ lying in a face of $G$ incident to $a$, then $\psi(\gamma)$ is a path connecting the vertex $\psi(a)$ of $\hat{G}$ to the pole $\psi(p)$ of $\hat{\beta}$ lying in a face of $\hat{G}$ incident to $\psi(a)$.
\end{enumerate}
\end{lemma}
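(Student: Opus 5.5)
The whole argument will rest on the identity $\beta=\hat\beta\circ\psi$. From it we get $\psi(\beta^{-1}(S))\subseteq \hat\beta^{-1}(S)$ for every set $S\subseteq\mathbb{CP}^1$. We use this with $S=\{0\}$, $S=\{1\}$, $S=(0,1)$, $S=\{\infty\}$, and with $S$ the complement of $[0,1]\cup\{\infty\}$. Thus $\psi$ maps the vertices of $G$ (preimages of $0$) to vertices of $\hat G$, the white midpoints to white midpoints, and the poles of $\beta$ to poles of $\hat\beta$. Since $\psi$ is continuous, it maps connected sets to connected sets. Both statements then follow from tracking where connected pieces of $\beta^{-1}([0,1])$ and of its complement go.

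For 1), let $e$ be an edge of $G$ joining $a$ and $b$. As a bicolored object, $e$ is the union of two closed half-edges, each a component of $\beta^{-1}([0,1])$ lying over all of $[0,1]$. The open half-edge $e_1$ from $a$ to the midpoint $w$ is a connected component of $\beta^{-1}((0,1))$. Its image $\psi(e_1)$ is a connected subset of $\hat\beta^{-1}((0,1))$, so it lies in a single open half-edge $\hat e_1$ of $\hat G$.

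We also need $\psi(e_1)=\hat e_1$. Both $\beta|_{e_1}$ and $\hat\beta|_{\hat e_1}$ are homeomorphisms onto $(0,1)$. Hence $\psi|_{e_1}=(\hat\beta|_{\hat e_1})^{-1}\circ\beta|_{e_1}$ is a homeomorphism onto $\hat e_1$. Taking closures, $\psi$ maps $a$ to the black endpoint of $\hat e_1$ and $w$ to its white endpoint $\hat w$. The same argument applies to the other half-edge $e_2$ from $w$ to $b$: it maps onto a half-edge $\hat e_2$ ending at $\hat w$ and at $\psi(b)$.

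Because $\hat\beta$ is clean, $\hat w$ has exactly two incident half-edges, and together they form one edge of $\hat G$. We must still check that $\hat e_1\neq\hat e_2$, since otherwise $\psi(e)$ would fold back onto one half-edge. Near $w$, $\beta$ has local degree $2$, so $\deg_w\psi\cdot\deg_{\psi(w)}\hat\beta=2$ with $\deg_{\psi(w)}\hat\beta=2$, which forces $\psi$ to be locally injective at $w$. The two half-edges at $w$ leave in opposite directions, so their images leave $\hat w$ in opposite directions and hence lie on different half-edges. Therefore $\psi(e)$ is the edge $\hat e_1\cup\hat e_2$, joining $\psi(a)$ and $\psi(b)$. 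This local-injectivity step at the midpoint is where I expect the main care is needed.

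For 2), write $\gamma$ as a path from $a$ to $p$ whose interior lies in an open face $F$ of $G$, that is, in a component of $\beta^{-1}(D)$ with $D=\mathbb{CP}^1\setminus([0,1]\cup\{\infty\})$. The path $\gamma$ should be interpreted this way, and I would make that interpretation explicit. Then $\psi(F)$ is connected and contained in $\hat\beta^{-1}(D)$, so it lies in one face $\hat F$ of $\hat G$. Since $p\in\overline F$, its image $\psi(p)$ lies in $\overline{\hat F}$ and is a pole of $\hat\beta$. As $\hat\beta$ maps $\hat\beta^{-1}(\infty)\cap\overline{\hat F}$ to $\infty$ and each face contains exactly one pole, $\psi(p)$ is the pole lying in $\hat F$.

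Likewise, $a\in\overline F$ gives $\psi(a)\in\overline{\hat F}$, and $\psi(a)$ is a vertex, so it is incident to $\hat F$. The image $\psi(\gamma)$ is a path from $\psi(a)$ to $\psi(p)$ whose interior lies in $\hat F$. This proves 2).
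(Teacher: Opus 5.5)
Your proof is correct and follows the same idea as the paper's proof: use $\beta=\hat\beta\circ\psi$ to see that $\psi$ carries connected pieces of $\beta^{-1}((0,1))$, and of the preimage of $\mathbb{CP}^1\setminus([0,1]\cup\{\infty\})$, into the corresponding pieces for $\hat\beta$. Your extra steps go beyond the paper's shorter argument, which only notes that $\psi(e)$ has no interior black vertex and that $\psi(\gamma)$ crosses no edge. The local-degree argument at the white midpoint rules out $\psi(e)$ folding back onto a single half-edge, and you explicitly require the interior of $\gamma$ to lie in the open face; the first of these is exactly what justifies concluding that $\psi(e)$ is a genuine loop when $\psi(a)=\psi(b)$ in the proof of Theorem~\ref{t1}.
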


\begin{proof}
Let us consider $e$ as a path running from $a$ to $b$ in $G$. Clearly, $\psi(e)$ is then a path running from $\psi(a)$ to $\psi(b)$ in $\hat{G}$. Moreover, this path contains no vertices of $\hat{G}$ distinct from $\psi(a)$ and $\psi(b)$, because otherwise the open arc between $a$ and $b$ would contain a point belonging to $\beta^{-1}(\{0\})$. Since any such point is a vertex of $G$, we obtain a contradiction with the assumption that $e$ is an edge. Thus, $\psi(a)$ and $\psi(b)$ are connected by an edge in $\hat{G}$. This proves the first statement.

To prove the second statement, we observe that since $\gamma$ connects $a$ to $p$, the path $\psi(\gamma)$ connects $\psi(a)$ to the pole $\psi(p)$ of $\hat{\beta}$. This path contains no points belonging to $\hat{\beta}^{-1}((0,1))$ because otherwise the original path $\gamma$ would contain points from $\beta^{-1}((0,1))$, which form the edges of $G$. Thus, $\psi(\gamma)$ does not cross the edges of $\hat{G}$, which means that $\psi(a)$ is incident to the face containing $\psi(p)$.
\end{proof}

\noindent{\it Proof of Theorem~\ref{t1}.} The first statement follows directly from Theorem~\ref{41}. 
To construct the vertex coloring for the second statement, we consider the Platonic graph of type $(k,l)$ as $\hat{G}=\beta_{k,l}^{-1}([0,1])$, take a proper $v(k,l)$-coloring of $\hat{G}$, and pull it back to $G=\beta^{-1}([0,1])$ via the function $\psi$ from the equality 
\begin{equation} \label{ho} 
\beta = \beta_{k,l} \circ \psi
\end{equation} 
provided by Theorem~\ref{41}.
Explicitly, each vertex $v$ of $G$ receives the color of its image $\psi(v)$.

If $a$ and $b$ are two adjacent vertices of $G$ and $e$ is the edge between them, then the assumption that $a$ and $b$ have the same color implies that $\psi(a) = \psi(b)$. By Lemma~\ref{hp}, this leads to the conclusion that $\psi(e)$ is a loop. Since $\hat{G}$ contains no loops, this proves that the vertex coloring is proper. The part about color distribution follows from equality~\eqref{ho}. 

The statement regarding the face coloring of $G$ is proved analogously by using duality. \qed

\section{Non-existence of 1-uniform and 1-Platonic graphs} 
In this short section, we prove the following result.

\begin{theorem}\label{51}
There exist neither $1$-uniform nor $1$-Platonic graphs of any type.
\end{theorem}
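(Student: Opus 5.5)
The plan is to translate the graph statement into a statement about clean Belyi functions and then apply Theorem~\ref{h2}. Let $G$ be a $1$-uniform graph of type $(k,l)$ with $n$ edges, and let $\beta$ be its clean Belyi function of degree $2n$. The single exception can be either a face or a vertex. If it is a vertex, we replace $G$ by its dual and $\beta$ by $1/\beta$. The dual of a $1$-uniform graph of type $(k,l)$ with an exceptional vertex is a $1$-uniform graph of type $(l,k)$ with an exceptional face. Hence we may assume the exceptional element is a face.

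In that case every multiplicity of $\beta$ at points of $\beta^{-1}(0)$ is divisible by $k$, since these multiplicities are the vertex degrees. Every multiplicity at points of $\beta^{-1}(1)$ equals $2$, because $\beta$ is clean. Every multiplicity at points of $\beta^{-1}(\infty)$ is divisible by $l$, except for exactly one pole, namely the one lying in the exceptional face. The triple $(k,2,l)$ satisfies the hypotheses of Theorem~\ref{h2} with $z_1=0$, $z_2=1$, $z_3=\infty$ and $m=2$. That theorem says no such rational function exists, which is a contradiction. So there are no $1$-uniform graphs.

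For $1$-Platonic graphs, I first check that a $1$-Platonic graph of type $(k,l)$ is automatically either uniform or $1$-uniform of type $(k,l)$. Its vertex degrees equal $k$, so they are divisible by $k$. Its face degrees equal $l$ except for one face of degree $v$. If $l\nmid v$, the graph is $1$-uniform and the previous paragraph rules it out.

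If $l\mid v$ with $v\neq l$, the graph is uniform, so Theorem~\ref{41} gives $\beta=\beta_{k,l}\circ\psi$. I then compare preimages. At a pole $p$ of $\beta$ we have $\deg_p\beta=\deg_{\psi(p)}\beta_{k,l}\cdot\deg_p\psi=l\deg_p\psi$, because every pole of $\beta_{k,l}$ has multiplicity exactly $l$. The analogous identity holds over $0$ with multiplicity $k$, and over $1$ with multiplicity $2$. Since all vertices have degree exactly $k$ and all but one face have degree exactly $l$, $\psi$ is unramified over every point of $\beta_{k,l}^{-1}(\{0,1,\infty\})$ except at one point over a pole of $\beta_{k,l}$. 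There it has local degree $v/l>1$.

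I then apply Riemann--Hurwitz to $\psi$. The only critical points of $\psi$ lying over $\beta_{k,l}^{-1}(\{0,1,\infty\})$ are concentrated at that one point. However, $\psi$ may also have critical values away from this set, and controlling these is the main obstacle. The way around it is that any critical point of $\psi$ whose image is not a critical point of $\beta_{k,l}$ would produce a critical value of $\beta$ outside $\{0,1,\infty\}$. Moreover, the critical points of $\beta_{k,l}$ are exactly the points of $\beta_{k,l}^{-1}(\{0,1,\infty\})$. Hence $\psi$ has exactly one critical value, at which it is ramified at a single point. But a nonconstant rational function of degree $d>1$ has $2d-2\ge 2$ ramification, spread over at least two critical values. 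This contradiction forces $\deg\psi=1$ and $v=l$, so the graph is Platonic rather than $1$-Platonic.
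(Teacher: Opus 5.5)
Your proof is correct and follows the paper's argument: duality plus Theorem~\ref{h2} handles the $1$-uniform case, and the $1$-Platonic case uses the factorization $\beta=\beta_{k,l}\circ\psi$ from Theorem~\ref{41}, the chain rule and Riemann--Hurwitz to show $\psi$ would have a single critical value. You also spell out why $\psi$ has no critical points outside $\beta_{k,l}^{-1}(\{0,1,\infty\})$, a step the paper leaves implicit.
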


\begin{proof}
Assume that there exists a $1$-uniform graph $G$ of type $(k,l)$. By duality, it is enough to consider the case of a single exceptional face. Let $\beta$ be the clean Belyi function of $G$. Then all multiplicities of $\beta$ at the points of $\beta^{-1}(0)$ are divisible by $k$, all multiplicities of $\beta$ at the points of $\beta^{-1}(1)$ are divisible by $2$, and all multiplicities of $\beta$ at the points of $\beta^{-1}(\infty)$ except one are divisible by $l$. However, such a function cannot exist by Theorem~\ref{h2}. 

Assume now that $G$ is a $1$-Platonic graph of type $(k,l)$ with one exceptional face of degree $u$, and let $\beta$ be its Belyi function. Then, by the first part of the theorem, $u$ is divisible by $l$. Therefore, by Theorem \ref{41}, there exists a rational function $\psi$ such that equality \eqref{ho} holds. 
Since $G$ is $1$-Platonic by assumption, by comparing the multiplicities of the left- and right-hand sides of \eqref{ho} via the chain rule, we conclude that $\psi$ has a unique critical point, and hence a unique critical value. Since the Riemann–Hurwitz formula implies that a rational function with only one critical value cannot exist, we conclude that a $1$-Platonic graph also cannot exist. \qedhere
\end{proof}

\section{2-uniform graphs and 2-Platonic graphs} 
In this section, we study $2$-uniform and $2$-Platonic graphs. In particular, we prove Theorems~\ref{t2} and~\ref{t3}. We also prove an analogue of Theorem~\ref{41} for $2$-uniform graphs. By duality, it suffices to consider graphs with either two exceptional faces, or with one exceptional face and one exceptional vertex. 

We begin with the following result, which proves parts (1) and (2) of statements II and III in Theorem~\ref{t2}.

\begin{theorem} \label{61}
The following statements hold:
\begin{enumerate}[label={\bf \Roman*.}]
\item If $G$ is a $2$-uniform graph of type $(k,l)$ with $n$ edges having two exceptional faces of degrees $u$ and $v$, then:
\begin{enumerate}[label={\rm \arabic*)}]
\item The equality $\gcd(u,l) = \gcd(v,l)$ holds.
\item If $t$ is the integer defined by the previous equality, then the number 
\[
s = \frac{n}{n(k,l)} \cdot \frac{l}{t}
\]
is an integer.
\end{enumerate}

\item If $G$ is a $2$-uniform graph of type $(k,l) $ with $n$ edges having one exceptional vertex of degree $u$, and one exceptional face of degree $v$, then:
\begin{enumerate}[label={\rm \arabic*)}]
\item The equality $k=l=3$ holds. 
\item The number $n$ is even. 
\end{enumerate}
\end{enumerate}
\end{theorem}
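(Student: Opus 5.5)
The plan is to translate each assertion into a statement about the clean Belyi function $\beta$ of $G$ and to use the results of Section~3. Throughout, $\deg\beta=2n$. All multiplicities over $0$ are vertex degrees and all multiplicities over $1$ equal $2$. The poles of $\beta$ are the faces of $G$, with multiplicities equal to the face degrees.

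\emph{Part I.} In case I all multiplicities over $0$ are divisible by $k$ and all over $1$ by $2$. All multiplicities over $\infty$ are divisible by $l$ except at the two poles $x,y$ of multiplicities $u,v$. Theorem~\ref{h3} with $(z_1,z_2,z_3)=(0,1,\infty)$ and $m=2$ then gives $\gcd(u,l)=\gcd(v,l)=t$ at once.

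For statement 2) we do not try to factor $\beta$ itself. Instead we kill the exceptions by a cyclic cover. Put $d=l/t$ and let $h=\mu\circ z^d$, where $\mu$ is a M\"obius transformation with $\mu(0)=x$ and $\mu(\infty)=y$.
\begin{itemize}
\item The composition $\beta\circ h$ has multiplicities $ud$ and $vd$ at the points over $x$ and $y$. Both are divisible by $l$, since $ud=l\cdot u/t$ and $t\mid u$, and likewise for $v$.
\item At every other point $h$ is unramified, so all remaining multiplicities over $0,1,\infty$ keep their divisibility by $k,2,l$.
\end{itemize}
Theorem~\ref{h1} applied to $\beta\circ h$ therefore shows that $2n(k,l)$ divides $\deg(\beta\circ h)=2nd$. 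This says exactly that $s=\frac{n}{n(k,l)}\cdot\frac{l}{t}$ is an integer.

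\emph{Part II, statement 1).} Now the exceptional vertex is a zero $x$ of multiplicity $u$ with $k\nmid u$, and the exceptional face is a pole $y$ of multiplicity $v$ with $l\nmid v$. Apply Theorem~\ref{h4} with the unexceptional value $z_1=1$ (divisor $2$), $z_2=0$ (divisor $k$) and $z_3=\infty$ (divisor $l$). It gives
\[
\frac{k}{\gcd(u,k)}=\frac{l}{\gcd(v,l)}=:d .
\]
Since $k\nmid u$, we have $d\ge 2$, and $d$ divides both $k$ and $l$. Going through the Platonic pairs:
\begin{itemize}
\item $(3,4)$, $(3,5)$ and $(2,r)$ with $r$ odd (up to swapping) are impossible, because there $\gcd(k,l)=1$.
\item $(2,r)$ with $r$ even: if $k=2$, then $u$ is odd and all other vertex degrees are even, which contradicts the handshake lemma, since the sum of vertex degrees is $2n$. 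If $l=2$, the same parity argument applies to the face degrees, whose sum is also $2n$.
\end{itemize}
The only pair left is $(3,3)$, so $k=l=3$.

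\emph{Part II, statement 2).} Here $d=3$. Take $h=\mu\circ z^3$ with $\mu(0)=x$ and $\mu(\infty)=y$, exactly as in Part I. Then $\beta\circ h$ has all multiplicities over $0$ and $\infty$ divisible by $3$, and all over $1$ divisible by $2$. Theorem~\ref{h1} gives $2n(3,3)=12\mid 6n$, so $n$ is even.

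The only points that need care are checking that composing with $h$ does not create new non-divisible multiplicities (it does not, since $h$ is unramified away from $0,\infty$), and the elimination of the $(2,r)$ case in Part II. I would first attempt that elimination via the parity argument above before looking for anything heavier.
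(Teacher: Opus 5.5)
Your proposal is correct and follows the paper's proof essentially step for step. You use Theorem~\ref{h3} for I.1 and Theorem~\ref{h4} with the handshake parity argument for II.1; for I.2 and II.2 you pull back by a power map $z^{l/t}$ (resp.\ $z^{3}$) centered at the two exceptional points and apply the factorization theorem, citing Theorem~\ref{h1} directly where the paper cites its specialization Theorem~\ref{41}, and composing with $\mu\circ z^{d}$ rather than renormalizing $\beta$ is only a cosmetic difference.
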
 

\begin{proof}
The first statement of Part I follows directly from Theorem~\ref{h3} applied to a clean Belyi function corresponding to $G$.     
To prove the second statement, we choose a clean Belyi function $\beta$ in the equivalence class corresponding to $G$ such that the poles of $\beta$ corresponding to the exceptional faces are located at $0$ and $\infty$. Then the chain rule implies that the composition $\beta \circ z^{l/t}$ is a clean Belyi function corresponding to a uniform graph of type $(k,l)$. Hence, by Theorem~\ref{41}, there exists a rational function $\phi$ such that the diagram
\begin{equation} \label{oh}
\begin{tikzcd}[row sep=2.7em, column sep=3.5em, baseline=(current bounding box.center)]
\mathbb{C}\mathbb{P}^1 \arrow[r,"\phi"] \arrow[d,"z^{l/t}"']    
    & \mathbb{C}\mathbb{P}^1 \arrow[d,"\beta_{k,l}"] \\
\mathbb{C}\mathbb{P}^1 \arrow[r,"\beta"']    
    & \mathbb{C}\mathbb{P}^1
\end{tikzcd}
\end{equation}
commutes. Since this implies the equality   
\begin{equation} \label{deg_eq}
n\cdot \frac{l}{t} = n(k,l)\cdot \deg \phi,
\end{equation}
the second statement of Part I follows.

To prove the first statement of Part II, we observe that if $G$ has one exceptional vertex of degree $u$ and one exceptional face of degree $v$, then by Theorem~\ref{h4} (with $k$ and $m$ interchanged), the equality 
\begin{equation}
\frac{k}{\gcd(u,k)} = \frac{l}{\gcd(v,l)}
\end{equation}
holds. However, it is easy to see that this equality cannot hold for $\{k,l\} = \{3,4\}$ or $\{3,5\}$. On the other hand, for $\{k,l\} = \{2,r\}$ with $r \geq 2$, it is possible only if both sides are equal to $2$. Thus, if, say, $k=2$, then $u$ must be odd, which implies that the sum of the degrees of all vertices is also odd---a contradiction. This shows that $(k,l) = (3,3)$.

Finally, the second statement of Part II can be obtained in the same way as in Part I by observing that if a clean Belyi function $\beta$ for $G$ is chosen such that the points of $\beta^{-1}(1)$ and $\beta^{-1}(\infty)$ corresponding to the exceptional vertex and face are located at $0$ and $\infty$, then the composition $\beta \circ z^{3}$ is a Belyi function corresponding to a uniform graph of type $(3,3).$ 
\end{proof}

For each Platonic pair $(k,l)$ with $k,l \in \{3,4,5\}$, let us introduce the graph $G_{k,l}$ as shown in Fig.~\ref{f4}, and denote by $\beta_{G_{k,l}}$ the corresponding clean Belyi function. Note that all these graphs are $2$-uniform, and their exceptional faces (or, in the case of $G_{3,3}$, a face and a vertex) have degree one.

\begin{figure}[h]
    \centering
    \includegraphics[width=0.75\textwidth]{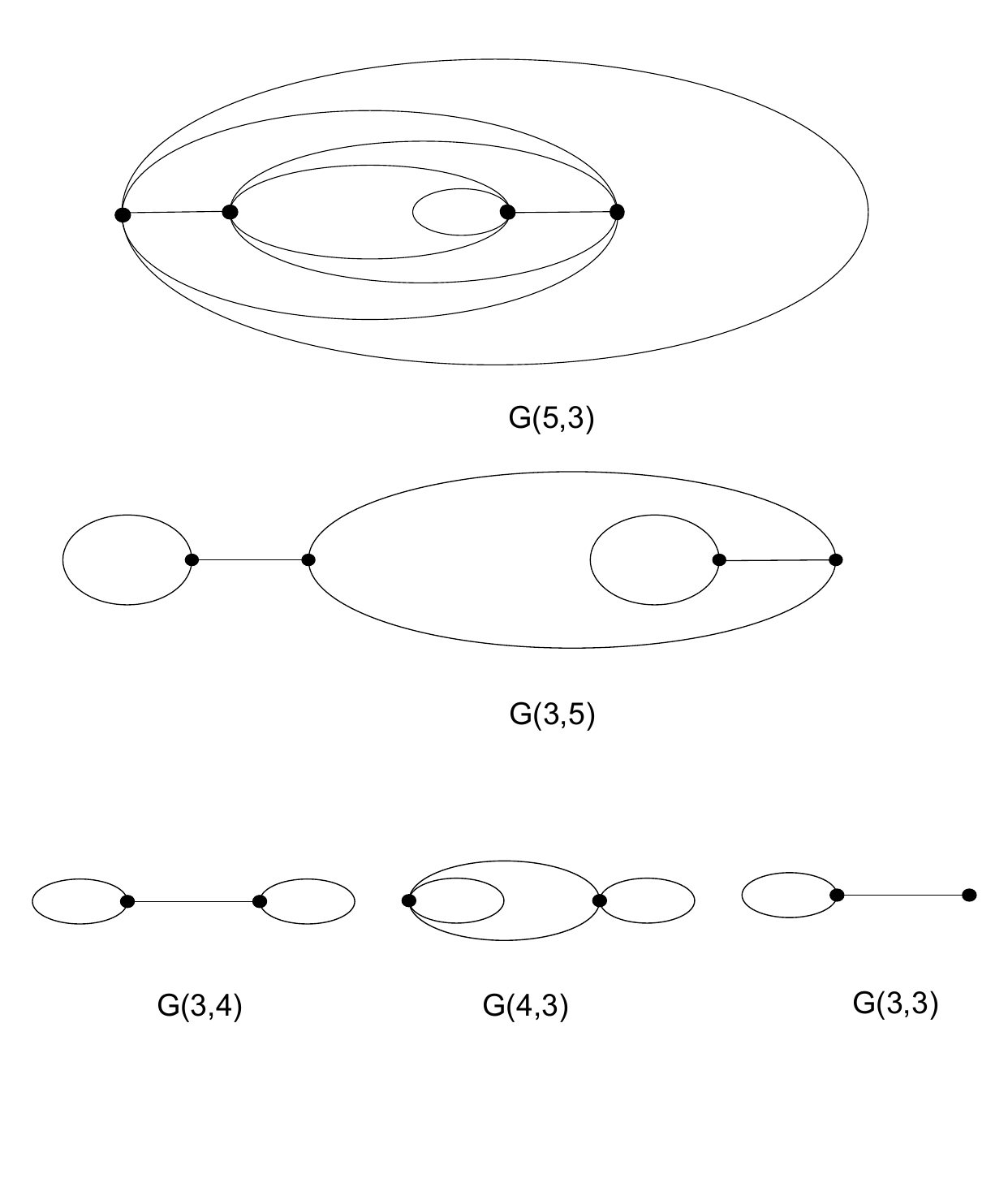}
   \caption{}
    \label{f4}
\end{figure}

The following statement is an analogue of Theorem~\ref{62} for $2$-uniform graphs of type $(k,l)$ with $k,l \in \{3,4,5\}$. Note that a similar analogue also exists for $2$-uniform graphs of other types. However, the most natural formulation of this generalization is for bicolored graphs (see Theorem~\ref{72} below). The reason for this is that a decomposition of a clean function $\beta$ into a composition $\beta = \hat{\beta} \circ \psi$ implies, in general, only that $\hat{\beta}$ is a Belyi function, which is not necessarily clean. This shows that the consideration of bicolored graphs may be necessary even when studying ordinary ones.

\begin{theorem} \label{62} 
Let $G$ be a $2$-uniform graph of type $(k,l)$ with $k,l \in \{3,4,5\}$, and let $\beta$ be its clean Belyi function. 
\begin{enumerate}[label={\rm \arabic*)}]
\item If $G$ has two exceptional faces, then there exists a rational function $\psi$ such that $\beta = \beta_{G_{k,l}} \circ \psi.$ 
\item If $G$ has one exceptional vertex and one exceptional face, then $k=l=3$ and there exists a rational function $\psi$ such that $\beta = \beta_{G_{3,3}} \circ \psi.$ 
\end{enumerate} 
\end{theorem}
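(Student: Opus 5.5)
We would mimic the proof of Theorem~\ref{h1}: instead of factoring through $\beta_{k,l}$ we factor through $\beta_{G_{k,l}}$, continuing a branch of $\beta_{G_{k,l}}^{-1}\circ\beta$ analytically and checking that its monodromy is trivial. First, by Theorem~\ref{61}, in case 2) we already have $k=l=3$. In case 1) the two exceptional faces have degrees $u,v$ with $\gcd(u,l)=\gcd(v,l)=t$. For $k,l\in\{3,4,5\}$, Lemma-type computations (Theorem~\ref{h3}) show that the relevant monodromy of $\beta_{k,l}^{-1}$ around $\infty$ is a product of $l$-cycles. The graph $G_{k,l}$ is the natural ``universal'' $2$-uniform object: its Belyi function $\beta_{G_{k,l}}$ has all multiplicities over $0$ equal to $k$, all over $1$ equal to $2$, and over $\infty$ all equal to $l$ except two poles of multiplicity $1$ (resp.\ one vertex and one face of degree $1$ in the $(3,3)$ case).

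The key construction is to normalize as in the proof of Theorem~\ref{61}: place the exceptional poles of $\beta$ at $0$ and $\infty$. We do the same for $\beta_{G_{k,l}}$. Then, by diagram~\eqref{oh}, $\beta\circ z^{l/t}=\beta_{k,l}\circ\phi$. Since $\beta_{G_{k,l}}$ is the simplest such function, we also have $\beta_{G_{k,l}}\circ z^{l}=\beta_{k,l}\circ\phi_0$ with $\deg\phi_0=1$, because $G_{k,l}$ has $n(k,l)/l$ edges, which we read off Fig.~\ref{f4}. So $\beta_{G_{k,l}}$ is essentially the quotient of $\beta_{k,l}$ by a cyclic rotation group of order $l$ fixing two face centers. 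Equivalently, $\beta_{G_{k,l}}=\beta_{k,l}\circ\mu\circ z^{1/l}$ in the sense of the invariant functions of Klein: $\beta_{k,l}$ is invariant under the rotation $z\mapsto\varepsilon z$, $\varepsilon=e^{2\pi i/l}$, about the face center at $0$.

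Now we consider a branch $\psi_{z_0}$ of $\beta_{G_{k,l}}^{-1}\circ\beta$. Around preimages of $0$ and $1$, and around non-exceptional poles, the monodromy is trivial exactly as in Theorem~\ref{h1}: locally $\beta_{G_{k,l}}$ has multiplicities $k$, $2$, $l$ at all points over $0$, $1$ and at the non-exceptional poles. The local inverse at those points closes up provided the lifted path is steered to a non-exceptional point. The difficulty is the poles: near $\infty$, the inverse $\beta_{G_{k,l}}^{-1}$ has branches with local degree $1$ (the exceptional poles) and degree $l$, so loops around $\infty$ may permute branches non-trivially. To control this, we would use the commutative diagram: $\psi$ should be $z^{l/t}$-descended from $\phi$. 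Concretely, we would define $\psi$ by $\psi(z^{l/t})$-equivariance, i.e.\ show $\phi(\varepsilon' z)=\varepsilon\phi(z)$-type equivariance for $\varepsilon'=e^{2\pi i t/l}$. That follows since both $z\mapsto\phi(\varepsilon' z)$ and $\phi$ solve $\beta_{k,l}\circ\cdot=\beta\circ z^{l/t}$, hence differ by a deck transformation of $\beta_{k,l}$ (the Platonic group) fixing the face centers $\phi(0),\phi(\infty)$. This is a rotation about $0$, since we can arrange $\phi(0)=0$. Then $\phi$ descends through $z^{l/t}$ on the source and $z^l$ on the target to the desired $\psi$.

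Case 2) is identical with the rotation of order $3$ about an axis through a vertex and the opposite face center of the tetrahedron, using $\beta\circ z^3$ as in Theorem~\ref{61}. The main obstacle is the equivariance step: we must verify that the deck transformation obtained is precisely a generator power compatible with the exponents, so that the quotient maps commute and yield a rational $\psi$ with $\beta=\beta_{G_{k,l}}\circ\psi$. We would also need to check carefully that $G_{k,l}$ of Fig.~\ref{f4} is indeed the quotient graph of the Platonic graph by this rotation.
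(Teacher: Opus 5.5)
Your argument is correct, but it takes a different route from the paper's.

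\textbf{The paper's route.} The paper never lifts to $\beta_{k,l}$. It continues branches of $\beta_{G_{k,l}}^{-1}\circ\beta$ directly, and it resolves exactly the difficulty at which you stopped. The monodromy of $\beta_{G_{k,l}}$ has $\sigma_0$ made of $k$-cycles, $\sigma_1$ made of $2$-cycles, and $\sigma_\infty$ made of $l$-cycles and fixed points. Hence for every $z\neq x,y$ the loop $\gamma_z$ acts trivially on every branch; no ``steering'' is needed. The monodromy group is therefore cyclic, generated by the image of $\gamma_x$, with the image of $\gamma_y$ its inverse. That image is conjugate to $\sigma_\infty^u$ (or $\sigma_0^u$ for an exceptional vertex), which fixes the branches landing on the degree-one faces or vertex. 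Such a branch is globally single-valued and extends to $\psi$. This argument uses only the cycle types of $\beta_{G_{k,l}}$, involves no lift through $z^{l/t}$ and no symmetry group, and treats all cases uniformly.

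\textbf{Your route.} You write $\beta\circ z^{l/t}=\beta_{k,l}\circ\phi$, use the deck group $\Gamma$ of $\beta_{k,l}$ to get $\phi(\varepsilon' z)=g\circ\phi(z)$ with $g(0)=0$, and then descend. This is essentially the mechanism of the paper's proof of Theorem~\ref{72}. It needs more input: the $\gcd$ equality of Theorem~\ref{61}, Theorem~\ref{41}, and the Galois property of $\beta_{k,l}$. In return it gives more: the lift $\phi$ with $\phi^l=\psi\circ z^{l/t}$, the realization of $G_{k,l}$ as a quotient of the Platonic graph, and where $\psi$ sends the exceptional poles.

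The two items you leave open are not real obstacles.
\begin{enumerate}
\item The descent works whatever $g$ is. In your normalization $\beta_{k,l}=\beta_{G_{k,l}}\circ w^l$, so the $l$ maps $w\mapsto\lambda w$ with $\lambda^l=1$ are deck transformations fixing $0$. The stabilizer of $0$ in $\Gamma$ has order equal to the ramification index there, namely $l$, so it consists exactly of these maps. Hence $\phi^l$ is invariant under $z\mapsto\varepsilon' z$, so $\phi^l=\psi\circ z^{l/t}$. Then $\beta\circ z^{l/t}=\beta_{G_{k,l}}\circ\psi\circ z^{l/t}$, i.e.\ $\beta=\beta_{G_{k,l}}\circ\psi$. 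No compatibility of exponents is required.
\item Your own degree count already gives the quotient statement you need. $\beta_{G_{k,l}}\circ z^l$ is uniform of type $(k,l)$ and has degree $2n(k,l)$, so $\phi_0$ is M\"obius.
\end{enumerate}

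One correction: for type $(3,3)$ in part~1, no rotation fixes two face centers. The element $g$ fixes the face center $\phi(0)$ and the opposite vertex, which forces $\phi(\infty)=\phi(0)$. Accordingly, $\beta_{G_{3,3}}$ (the vertex--face quotient) must be normalized with its degree-one face at $0$. Your argument survives because it only uses $g(\phi(0))=\phi(0)$.
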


\begin{proof}
The proof is obtained by a modification of the proof of Theorem \ref{t1}. Let us consider, for instance, the case $(k,l)=(5,3)$. The remaining cases are treated similarly.

Let $x$ and $y$ be the points in $\beta^{-1}(\infty)$ where $\deg_{x} \beta = u$ and $\deg_{y} \beta = v$. 
Choose a point $z_0$ such that $\beta(z_0) \notin \{0, 1, \infty\}$ and consider the algebraic function $(\beta_{G_{k,l}})^{-1} \circ \beta$. Arguing as in Section 3, we see that any branch of this function defines a meromorphic germ at $z_0$, which can be analytically continued along any path avoiding the points in $\beta^{-1}(\{0, 1, \infty\})$. Furthermore, since the permutation $\sigma$ of the branches of $(\beta_{G_{k,l}})^{-1}$ corresponding to lifting a small loop around $\infty$ under $\beta_{G_{k,l}}$ has the cyclic structure 
$$(3,3,3,3,3,3,1,1),$$ 
there is a germ $\psi_{z_0}$ that remains unchanged under analytical continuation along the path $\gamma_{x}$ defined as in Section 3. Since $\psi_{z_0}$ also remains unchanged under analytic continuation along the path $\gamma_z$ for any $z \in \beta^{-1}(\{0, 1, \infty\})$, except possibly $y$, for the same reasons as in the proof of Theorem~\ref{t1}, we conclude that the analytic continuation of $\psi_{z_0}$ has no monodromy at all. Hence, $\psi_{z_0}$ extends to a rational function $\psi$.
\end{proof}

Note that the chain rule implies that the function $\psi$, whose existence is guaranteed by Theorem~\ref{62}, maps the poles of $\beta$ corresponding to exceptional faces of $G$ to the poles of $\beta_{G_{k,l}}$ corresponding to exceptional faces of $G_{k,l}$. This can also be seen from the construction of $\psi$.

\vskip 0.2cm
\noindent{\it Proof of Theorem \ref{t2}.} By Theorems~\ref{51} and~\ref{61}, we only need to prove the statements regarding the relative position of exceptional vertices and faces. 
Let us consider a $2$-uniform graph $G = \beta^{-1}([0,1])$, for instance, of type $(5,3)$, and assume that its exceptional faces $x$ and $y$ are adjacent. By Theorem \ref{62}, there exists a rational function $\psi$ such that 
$$\beta = \beta_{G_{5,3}} \circ \psi.$$ 
Moreover, the images $\psi(x)$ and $\psi(y)$ are poles of $\beta_{G_{5,3}}$ lying in the exceptional faces of $G_{5,3}$, which we denote by $x'$ and $y'$.

Taking $e$ to be the edge in the dual graph $G^*$ connecting $x$ and $y$, we see by Lemma~\ref{hp} that the image $\psi(e)$ is an edge connecting $\psi(x)$ and $\psi(y)$ in the dual graph $G_{5,3}^*$. If the image of the set $\{x,y\}$ under $\psi$ consists of a single point $\psi(x)$ or $\psi(y)$, we obtain a contradiction because there is no loop in $G_{5,3}^*$ starting and ending at $x'$ or $y'$. On the other hand, if the images of the points $x$ and $y$ under $\psi$ are distinct, we also obtain a contradiction since the exceptional faces $x'$ and $y'$ of $G_{5,3}$ are not connected by an edge in $G_{5,3}^*$.

The remaining cases with $k,l \in \{3,4,5\}$ are treated similarly. 
Let us now consider the case of type $(k,2)$, where $k > 2$. It is more convenient to consider the dual situation. So assume that there exists a graph $G$ such that all of its vertices, except for two adjacent ones, have even degrees, while the degrees of all of its faces are divisible by $l > 2$, and show that this leads to a contradiction. 

Let us remove the edge $e$ connecting the vertices of odd degree. Then the resulting graph $G'$ has only vertices of even degree. Furthermore, if $e$ is incident to two distinct faces of degrees $L_1$ and $L_2$ in $G$, that is, if $G'$ is connected, the degrees of all faces of $G'$ are divisible by $l$, except for one face of degree $L_1 + L_2 - 2$. This latter degree is obviously not divisible by $l$, since otherwise $l > 2$ would be a divisor of $2$. However, such a graph cannot exist by Theorem~\ref{51}. 

Finally, if $e$ is incident to a single face of degree $L$ in $G$, that is, if $G'$ consists of two connected components, then the outer faces of these components have degrees $L_1'$ and $L_2'$ such that 
\[
L_1' + L_2' = L - 2.
\]
Hence, the degree of at least one of these faces is not divisible by $l$, which again contradicts Theorem~\ref{h2}. \qed

\begin{proof}[Proof of Theorem~\ref{t3}]
The first part of the theorem follows from Theorem~\ref{51}. To prove the remaining statements, we consider two cases: when the graph under consideration is not uniform, and when it is.

In the first case, arguing as in the proof of Theorem~\ref{61}, we see that there exists a rational function $\psi$ such that the equality 
\begin{equation} \label{ooh}
\beta \circ  z^{l/t} = \beta_{k,l} \circ  \psi
\end{equation}
holds. Moreover, since $G$ is $2$-Platonic, the chain rule applied to both sides of this equality implies that $\psi$ has exactly two critical points, which are located at zero and infinity.

It is easy to see that any function $\psi$ with this property has the form $\mu \circ z^s$ for some integer $s$ and a Möbius transformation $\mu$, and  therefore \eqref{ooh} takes the form 
\begin{equation} \label{oh-}
\beta \circ   z^{l/t}= \beta_{k,l} \circ z^{s}
\end{equation}
for some representative $\beta_{k,l}$ of the corresponding equivalence class of clean Belyi functions.

Calculating now the multiplicities of the left and right sides of \eqref{oh-} at zero and infinity, we obtain the equalities 
\[
u \cdot  \frac{l}{t} =s \cdot l, \qquad v \cdot \frac{l}{t}  =  s \cdot l
\]
implying that 
\begin{equation} \label{eqli}
u = v = s\cdot t=\deg \psi\cdot t=\frac{n \cdot l}{n(k,l)}.
\end{equation}

In the case where $G$ is uniform, we apply similar reasoning to the equality \eqref{ho} provided by Theorem~\ref{41}. Again, we conclude that without loss of generality we may assume that $\psi = z^s$. If $0$ and $\infty$ both belong to $\beta_{k,l}^{-1}(\infty)$, then the chain rule implies that $u = ls$ and $v = ls$. On the other hand, if one of these points belongs to $\beta_{k,l}^{-1}(\infty)$ and the other belongs to $\beta_{k,l}^{-1}(1)$, it implies that $u = ks$ and $v = ls$.
\end{proof}

\section{bicolored 2-uniform graphs and 2-Platonic graphs} 

Up to this point, we have considered ordinary graphs. However, the methods of \cite{pak}, refined in the present work, also apply to bicolored graphs, and in particular to the existence problem for such graphs. In turn, this last problem directly corresponds to the Hurwitz problem for Belyi functions. In this section, we demonstrate this by obtaining analogues of the results of the previous section for bicolored graphs. In particular, we completely describe the realizable passports of bicolored $2$-Platonic graphs.

For brevity, we will use the following conventions: the symbol $k^s$ within a passport will denote a string consisting of $s$ entries of $k$, while the symbol $k^*$ will denote a string consisting of an unspecified number of entries of $k$. Similarly, $\overline{k}^*$ will denote a string consisting of $s$ numbers divisible by $k$, where $s$ is unspecified. However, it will always be assumed that these unspecified numbers take values such that the resulting collection forms a passport. For example, the notation 
$$((4^*,2^2),(2^*),(\overline{3}^*),n)$$ 
means that the first partition consists of an unspecified number of $4$s along with two $2$s, the second partition consists exclusively of $2$s, and the third partition consists only of integers divisible by $3$, where the undefined parameters are chosen such that the sum of elements in each of the three partitions is equal to $n$ and, in addition, the total number of elements in all three partitions is $n+2$.

For any Platonic triple $(k,m,l)$, we denote by $\beta_{k,m,l}$ the Belyi function of the unique bicolored graph with the passport $(k^*,m^*,l^*)$, and set 
\begin{equation}
n(k,m,l) = \deg \beta_{k,m,l}.
\end{equation}
Of course, one of the numbers $k,m,l$ is still equal to $2$, but we no longer assume that this number is $m$. Therefore, a symmetric definition is required.

We define a bicolored uniform graph of type $(k,m,l)$ as a graph with a passport of the form 
\begin{equation}
(\overline{k}^*,\overline{m}^*,\overline{l}^*,n).
\end{equation}
Bicolored $2$-uniform and $2$-Platonic graphs are defined analogously to ordinary $2$-uniform and $2$-Platonic graphs, with the distinction that exceptional vertices can occur simultaneously among both black and white vertices, whereas in an ordinary graph (viewed as a bicolored one) the white vertices always have degree two and cannot be exceptional. By convention, we will not consider the Platonic graph of type $(2,2,r)$, $r \ge 2$, with the passport $((2^r),(2^r),(r^2),2r)$ as a $2$-Platonic graph of type $(2,2,m)$,  where  $m$ is not divisible by $r$.

The following result follows from Theorem \ref{h1} in the same way as Theorem \ref{41} does.

\begin{theorem} \label{71} 
Let $G$ be a bicolored plane graph with $n$ edges that is uniform of type $(k,m,l)$, and let $\beta$ be its Belyi function. Then there exists a rational function $\psi$ such that 
$
\beta = \beta_{k,m,l} \circ \psi.
$
In particular, the number of edges of $G$ is divisible by $n(k,m,l)$. \qed 
\end{theorem}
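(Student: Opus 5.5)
The plan is to apply Theorem~\ref{h1} directly to the Belyi function $\beta$ of $G$, with $(z_1,z_2,z_3)=(0,1,\infty)$ and with the triple $(k,m,l)$ taken as given. Since everything is now bicolored, we work with arbitrary Belyi functions rather than clean ones.

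First, we translate the combinatorial hypothesis into a statement about $\beta$. By the construction of $G_\beta$ in Section~2, the passport of $G$ coincides with the passport of $\beta$. So the partitions $\Pi_1,\Pi_2,\Pi_3$ record the multiplicities of $\beta$ at the points of $\beta^{-1}(0)$, $\beta^{-1}(1)$ and $\beta^{-1}(\infty)$. The passport has the form $(\overline{k}^*,\overline{m}^*,\overline{l}^*,n)$, which means:
\begin{itemize}
\item all multiplicities of $\beta$ over $0$ are divisible by $k$;
\item all multiplicities over $1$ are divisible by $m$;
\item all multiplicities over $\infty$ are divisible by $l$.
\end{itemize}
Recall that the third partition lists half face degrees; this is exactly the pole multiplicities, so no factor of $2$ enters.

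Second, we invoke Theorem~\ref{h1}. It gives a rational $\psi$ with $\beta=\beta^{0,1,\infty}_{k,m,l}\circ\psi$. We must then identify $\beta^{0,1,\infty}_{k,m,l}$ with $\beta_{k,m,l}$. The function $\beta^{0,1,\infty}_{k,m,l}$ has all multiplicities exactly $k$, $m$, $l$ over $0,1,\infty$, so it is a Belyi function with passport $(k^*,m^*,l^*)$. The bicolored graph with this passport is unique, so by Theorem~\ref{b1} this function is equivalent to $\beta_{k,m,l}$, that is, it equals $\beta_{k,m,l}\circ\mu$ for a Möbius $\mu$. Absorbing $\mu$ into $\psi$ gives $\beta=\beta_{k,m,l}\circ\psi$.

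The one subtle point is that Theorem~\ref{h1} was stated with $m$ in the middle slot but without requiring $m=2$. Its proof only uses the existence of a function with exact multiplicities $k,m,l$ over $z_1,z_2,z_3$, and Section~3 asserts this for any Platonic triple in any order. We should note that the Riemann--Hurwitz argument (inequality \eqref{ine}) makes $(k,m,l)$ Platonic, so $\beta_{k,m,l}$ exists.

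Finally, taking degrees gives $n=\deg\beta=\deg\beta_{k,m,l}\cdot\deg\psi=n(k,m,l)\deg\psi$. Since $G$ is bicolored, $n=\deg\beta$ equals its number of edges, so $n(k,m,l)$ divides the number of edges. No step is a real obstacle; the only care needed is the identification of $\beta^{0,1,\infty}_{k,m,l}$ with $\beta_{k,m,l}$ via uniqueness of the Platonic bicolored graph.
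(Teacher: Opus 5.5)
Your proposal is correct and is exactly the paper's argument: the paper simply states that Theorem~\ref{71} follows from Theorem~\ref{h1} in the same way as Theorem~\ref{41}. That is, apply Theorem~\ref{h1} to $\beta$ with $(z_1,z_2,z_3)=(0,1,\infty)$, then compare degrees. Your identification of $\beta^{0,1,\infty}_{k,m,l}$ with $\beta_{k,m,l}$ up to M\"obius precomposition (via uniqueness of the Platonic bicolored graph and Theorem~\ref{b1}), and your remark that Theorem~\ref{h1} does not need $m=2$, fill in details the paper leaves implicit.
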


If $G$ is a $2$-Platonic graph that is uniform, an argument similar to the one used in the proof of Theorem~\ref{t3} shows that under a convenient normalization of $\beta_{k,m,l}$, the rational function $\psi$ appearing in Theorem~\ref{71} is simply $z^s$ for some $s \ge 2$. On the other hand, if $\beta_{k,m,l}$ is normalized in such a way that $0$ and $\infty$ belong to $\beta_{k,m,l}^{-1}(\{0,1,\infty\})$, then the composition $\beta_{k,m,l} \circ z^s$ is obviously a Belyi function corresponding to a uniform $2$-Platonic graph. Thus, we obtain the following corollary.

\begin{corollary} \label{c71}
A passport of a bicolored $2$-Platonic graph that is also a uniform graph is realizable if and only if it is of the form 
\[
\left((k^*),(m^*),(l^*,(ls)^2),n(k,m,l)\cdot s\right) \quad \text{or} \quad \left((k^*),(m^*,ms),(l^*,ls),n(k,m,l)\cdot s\right),
\]
where $(k,m,l)$ is a Platonic triple and $s\geq 2$ is an integer. \qed 
\end{corollary}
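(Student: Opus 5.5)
Both directions reduce to Theorem~\ref{71} together with a description of rational functions having exactly two critical points. Interchanging the roles of $0,1,\infty$ (and permuting the triple $(k,m,l)$ accordingly), we may assume that the two exceptional entries of the passport lie either both over $\infty$ or one over $1$ and one over $\infty$. Exceptions occurring over the same value, or over different values, correspond to these two normal forms.

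\emph{Necessity.} Let $G$ be a bicolored $2$-Platonic graph that is uniform of type $(k,m,l)$, and let $\beta$ be its Belyi function. By Theorem~\ref{71}, $\beta=\beta_{k,m,l}\circ\psi$ with $\deg\psi=s$, so $n=n(k,m,l)\cdot s$. Since every non-exceptional multiplicity of $\beta$ equals the corresponding multiplicity of $\beta_{k,m,l}$, the chain rule shows that $\psi$ is unramified at all non-exceptional points of $\beta^{-1}(\{0,1,\infty\})$. Away from $\beta^{-1}(\{0,1,\infty\})$, $\psi$ is unramified anyway, because there $\beta$ has no critical points. Hence $\psi$ has at most two critical points, namely the two exceptional points. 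By Theorem~\ref{51}, or simply because a rational function cannot have exactly one critical point, there are either none or two.

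If there are none, then $\psi$ is M\"obius and $G$ is Platonic. By the stated convention, and because exceptional entries must differ from $k,m,l$, this case is excluded. So $\psi$ has exactly two critical points, and also exactly two critical values by Riemann--Hurwitz. Normalizing both critical points and both critical values to $0,\infty$ gives $\psi=\mu\circ z^s$ with $s\ge2$. Absorbing $\mu$ into $\beta_{k,m,l}$, the function $\beta_{k,m,l}$ has $0$ and $\infty$ in $\beta_{k,m,l}^{-1}(\{0,1,\infty\})$. If both lie over $\infty$, the exceptional multiplicities are $ls,ls$. If they lie over $1$ and $\infty$, they are $ms,ls$. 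The case of both lying over $0$ or over $1$ is the same after relabeling. This yields exactly the two listed forms.

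\emph{Sufficiency.} Normalize $\beta_{k,m,l}$ by a M\"obius precomposition so that two chosen points of $\beta_{k,m,l}^{-1}(\infty)$, or one point of $\beta_{k,m,l}^{-1}(1)$ and one of $\beta_{k,m,l}^{-1}(\infty)$, sit at $0$ and $\infty$. This is possible whenever the relevant fibres contain enough points. The only issue is type $(2,2,r)$, where $\beta^{-1}(\infty)$ has exactly two points, but two points are all we need. Then $\beta_{k,m,l}\circ z^s$ is a Belyi function, and by the chain rule its passport is the required one. Its multiplicities at other points are unchanged, since $z^s$ is unramified off $0,\infty$, and the degree is $n(k,m,l)s$.

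The main obstacle is the bookkeeping in the necessity step: making sure that $\psi$ being critical only at the two exceptional points really forces $\psi=\mu\circ z^s$, which holds because two critical points force two totally ramified critical values by Riemann--Hurwitz. One must also handle the degenerate $(2,2,r)$ convention correctly.
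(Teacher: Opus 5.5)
Your proof is correct and follows essentially the same route as the paper: Theorem~\ref{71} plus the chain rule forces $\psi$ to have exactly two critical points, hence $\psi=\mu\circ z^s$ with $s\ge 2$ after normalization. Conversely, $\beta_{k,m,l}\circ z^s$, with $0$ and $\infty$ placed in $\beta_{k,m,l}^{-1}(\{0,1,\infty\})$, realizes the listed passports. You add details the paper leaves implicit: the Riemann--Hurwitz argument that two critical points must both be totally ramified, and the check that each relevant fibre has at least two points. Your appeal to the $(2,2,r)$ convention in the M\"obius case is unnecessary, since the two exceptional entries already rule that case out.
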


The following result is an analogue of Theorem \ref{62}.

\begin{theorem} \label{72}
Let $G$ be a bicolored $2$-uniform graph and $\beta$ be its Belyi function. Then there exists a rational function $\psi$ such that $\beta = \hat{\beta} \circ \psi$, where $\hat{\beta}$ is the Belyi function of a $2$-uniform graph with a passport from the following list:
\[
\begin{array}{@{}l@{}}
\left((2^2), (3, 1), (3, 1), 4 \right)\!, \\[0.8ex]
\left((2^2, 1^2), (3^2), (3^2), 6 \right)\!, \\[0.8ex]
\left((2^3), (3^2), (4, 1, 1), 6 \right)\!, \\[0.8ex]
\left((2^4), (3^2, 1^2), (4^2), 8 \right)\!, \\[0.8ex]
\left((2^5, 1^2), (3^4), (4^3), 12 \right)\!, \\[0.8ex]

\left((2^6), (3^4), (5^2, 1^2), 12 \right)\!, \\[0.8ex]
\left((2^{10}), (3^6, 1^2), (5^4), 20 \right)\!, \\[0.8ex]
\left((2^{14}, 1^2), (3^{10}), (5^6), 30 \right)\!, \\[0.8ex]
\left((2^{k}, 1), (2^{k}, 1), (2k+1), 2k + 1\right)\!, \quad k \ge 1, \\[0.8ex]
\left((2^{k}, 1^2), (2^{k+2}), (2k+2), 2k + 2 \right)\!, \quad k \ge 1.
\end{array}
\]
\end{theorem}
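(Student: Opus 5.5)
We prove Theorem~\ref{72} the same way as Theorem~\ref{62}. The idea is to find a Belyi function $\hat\beta$ whose monodromy around the exceptional point(s) has a fixed branch, so that a germ of $\hat\beta^{-1}\circ\beta$ extends to a rational function. First we classify $2$-uniform bicolored graphs. Let $(k,m,l)$ be the type, i.e.\ the divisibility conditions at $0,1,\infty$. The Riemann--Hurwitz argument after \eqref{ine} shows that $(k,m,l)$ is a Platonic triple. The two exceptional points lie either both over one value, say $\infty$ with degrees $u,v$, or over two different values, say $1$ and $\infty$ with degrees $u,v$. In the first case Theorem~\ref{h3} gives $\gcd(u,l)=\gcd(v,l)=t$. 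In the second case Theorem~\ref{h4} gives $m/\gcd(u,m)=l/\gcd(v,l)$. Using the list of Platonic triples, we enumerate the possible values of the residual order $d=l/t$ (respectively of the common value $m/\gcd(u,m)$). This reduces the problem to finitely many patterns for each triple, plus the dihedral families.

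For each pattern we specify a target $\hat\beta$ from the list. Its passport has type $(k,m,l)$ away from two exceptional points, and at those points the exceptional multiplicities equal $t$ (respectively $\gcd(u,m)$ and $\gcd(v,l)$). For example, $((2^6),(3^4),(5^2,1^2),12)$ handles type $(2,3,5)$ with two exceptions over $\infty$ and $t=1$. The dihedral families $((2^k,1),(2^k,1),(2k+1),2k+1)$ and $((2^k,1^2),(2^{k+2}),(2k+2),2k+2)$ handle types $(2,2,r)$. The key property to check for each $\hat\beta$ is the following. Let $\hat\sigma$ be its monodromy around the exceptional value. Then some branch $w$ of $\hat\beta^{-1}$ over the base point lies in the orbit of $\hat\sigma$ of length equal to the exceptional multiplicity. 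Consequently $\hat\sigma^{u}$ fixes $w$, since that cycle length divides $u$. For each listed passport we also verify realizability, by exhibiting the graph directly as in Fig.~\ref{f4} or as a quotient of a Platonic dessin by a cyclic rotation.

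The monodromy argument then goes as follows. Choose $z_0$ and the germ $\psi_{z_0}$ of $\hat\beta^{-1}\circ\beta$ that corresponds to the branch fixed by continuation along $\gamma_x$, where $x$ is the first exceptional point. At every non-exceptional point $z$, the local multiplicity of $\beta$ is divisible by the cycle lengths of $\hat\beta$ over the same value, so continuation along $\gamma_z$ is trivial on every branch, exactly as in the proof of Theorem~\ref{h1}. This needs checking at non-exceptional points of $\hat\beta$ too: the non-exceptional cycles of $\hat\sigma$ have length $l$ or $m$, which divide the corresponding multiplicities of $\beta$. The fundamental group of the punctured sphere is generated by the $\gamma_z$ with product trivial, so $\gamma_y$ also acts trivially on $\psi_{z_0}$. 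Hence $\psi_{z_0}$ has trivial monodromy and extends to a rational $\psi$ with $\beta=\hat\beta\circ\psi$.

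The main obstacle is making the choice of $\hat\beta$ uniform, and in particular handling values of $t$ other than $1$ and $\gcd(u,m)\neq 1$. For $t>1$ the listed passports have exceptional multiplicity $1$ rather than $t$. Here we plan to check instead that some branch $w$ of $\hat\beta^{-1}$ in a short cycle has its orbit length dividing $u$, using that $\gcd(u,l)=t$, and to choose among the listed passports so that this holds. If that fails for some case, we fall back on first composing with a power map as in the proof of Theorem~\ref{61}. We also need to check that, up to permuting $0,1,\infty$, every (triple, pattern) case is covered by one list entry. This requires a careful case table, especially for $(2,2,r)$: there the exceptions can sit over the two values of multiplicity $2$, and parity forces the odd/even dichotomy that gives the two infinite families.
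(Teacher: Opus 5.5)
Your route is genuinely different from the paper's, and its core is sound.

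\textbf{The paper's route.} The paper composes with $z^{l/t}$ to make $\beta$ uniform and factors $\beta\circ z^{l/t}=\beta_{k,m,l}\circ\phi$ by Theorem~\ref{71}. It then shows $\phi=z^aR(z^{l/t})$ in coordinates where $\beta_{k,m,l}=\hat\beta^t_{k,m,l}\circ z^{l/t}$ is the quotient of the Platonic dessin by a cyclic rotation group. Finally it descends to $\beta=\hat\beta^t_{k,m,l}\circ z^aR(z)^{l/t}$. The list is thus produced rather than guessed: it is the set of quotients of Platonic dessins by stabilizers of a point over $0$, $1$ or $\infty$. Realizability therefore comes for free, and completeness reduces to the cyclic subgroups of $D_{2r},A_4,S_4,A_5$.

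\textbf{Your route.} You run the monodromy argument of Theorem~\ref{62} directly against a chosen $\hat\beta$. This skips the power map and the equivariance step. Since you only need one branch fixed by $\gamma_x$, it also does not matter whether both exceptional points go to the same point of $\hat\beta$. That coincidence is forced, for example, for two exceptional faces of type $(2,3,3)$ factoring through $((2^2),(3,1),(3,1),4)$, and the paper's normalization $z_1=0$, $z_2=\infty$ tacitly excludes it. The price is that you still need the paper's quotient construction to know the listed passports are realizable, and completeness becomes a case check.

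\textbf{The obstacle you flag is not real.} In every listed passport the exceptional entries are $1$'s, and the other entries over that value equal the type value. So the branch in a $1$-cycle is fixed by $\gamma_x$ for every $u$, whatever $t$ or $\gcd(u,m)$ is. All other $\gamma_z$ act trivially, and no power-map fallback is needed.

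\textbf{The case table.} By Theorems~\ref{h3}, \ref{h4} and parity, the only mixed cases are:
\begin{itemize}
\item $(2,3,3)$ with exceptions over the two $3$-values;
\item $(2,2,r)$ with $r$ odd and exceptions over the two $2$-values, handled by the first family with $2k+1=r$.
\end{itemize}
Two exceptions over a single value are handled by the stabilizer quotient for that value.

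\textbf{Where the table does not close.} Your claim that the two infinite families cover type $(2,2,r)$ fails in two situations: when both exceptions lie over the $r$-value, and when $r=2$. In both, the only available factor is the degree-two function with passport $((1^2),(2),(2),2)$, which is the $k=0$ member of the last family. For example, take a digon with two pendant edges attached at one vertex, both lying in the outer face. It has passport $((4,2,1,1),(2^4),(6,2),8)$ and is $2$-uniform of type $(2,2,2)$. A direct check shows it factors through no listed entry, even after permuting $0,1,\infty$. The paper's argument leads to the same degree-two quotient, so this is a defect of the statement rather than of your method. The last family should start at $k=0$, and its middle partition should read $(2^{k+1})$; as printed, $(2^{k+2})$ does not even sum to $2k+2$.
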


\begin{proof}
The theorem easily follows from Theorem~3 in~\cite{pak} and the classification of rational functions that are covering maps between orbifolds of positive Euler characteristic, which can be found in~\cite{pap1} and, in a slightly different form, in~\cite{gen}, where, in particular, the corresponding Belyi functions are explicitly computed. Below, we outline a more elementary approach.

Arguing as in the proof of Theorem~\ref{61}, we obtain that there exists a rational function $\phi$ such that the diagram 
\begin{equation} \label{h}
\begin{tikzcd}[row sep=2.7em, column sep=3.5em, baseline=(current bounding box.center)]
\mathbb{CP}^1 \arrow[r,"\phi"] \arrow[d,"z^{l/t}"']    
    & \mathbb{CP}^1 \arrow[d,"\beta_{k,m,l}"] \\
\mathbb{CP}^1 \arrow[r,"\beta"']    
    & \mathbb{CP}^1
\end{tikzcd}
\end{equation}
commutes. Furthermore, the chain rule implies that there exist $$z_1, z_2 \in \beta_{k,m,l}^{-1}(\{0,1,\infty\})$$ such that $\phi$ satisfies the conditions of Theorem~\ref{h6} with $k = l/t$. Choosing a representative of the equivalence class of clean Belyi functions $\beta_{k,m,l}$ in such a way that $z_1 = 0$ and $z_2 = \infty$, we may assume that $\phi$ has the form
\[
\phi = z^a R(z^{l/t}),
\]
where $R$ is a rational function and $a$ is not divisible by $l/t$.

Since the Belyi functions $\beta_{k,m,l}$ is invariant under the symmetry group of the corresponding Platonic solid, it easily follows that for any $t \mid l$, the representative of $\beta_{k,m,l}$ chosen above can be written in the form 
\begin{equation}\label{tho}
\beta_{k,m,l} = \hat{\beta}_{k,m,l}^t \circ z^{\frac{l}{t}}
\end{equation}
for some Belyi function $\hat{\beta}_{k,m,l}^t$, whose graph represents the quotient graph of the Platonic graph of type $(k,m,l)$ under the action of the cyclic group $\mathbb{Z}_{\frac{l}{t}}$ (the graphs in Fig.~\ref{f4} are examples of such quotient graphs).

The above implies that the diagram 
\begin{equation}\label{oc}
\begin{tikzcd}[row sep=2.7em, column sep=3.5em]
\mathbb{C}\mathbb{P}^{1} \arrow[r,"\phi"] \arrow[d,"z^{l/t}"'] 
    & \mathbb{C}\mathbb{P}^{1} \arrow[d,"z^{l/t}"] \\
\mathbb{C}\mathbb{P}^{1} \arrow[dr,"\beta"']
    & \mathbb{C}\mathbb{P}^{1} \arrow[d,"\hat{\beta}_{k,m,l}^t"] \\
    & \mathbb{C}\mathbb{P}^{1}
\end{tikzcd}
\end{equation}
commutes. Observing now that for any rational function $R$ and integers $a$ and $b$, the diagram 
\begin{equation} 
\begin{tikzcd}[row sep=2.7em, column sep=3.5em, baseline=(current bounding box.center)]
\mathbb{C}\mathbb{P}^1 \arrow[r,"z^aR(z^b)"] \arrow[d,"z^{b}"']    
    & \mathbb{C}\mathbb{P}^1 \arrow[d,"z^b"] \\
\mathbb{C}\mathbb{P}^1 \arrow[r,"z^aR^b(z)"']    
    & \mathbb{C}\mathbb{P}^1
\end{tikzcd}
\end{equation}
commutes, we see that diagram~\eqref{oc} can be completed to the commutative diagram 
\[
\begin{tikzcd}[row sep=2.7em, column sep=3.5em]
\mathbb{C}\mathbb{P}^{1} \arrow[r,"\phi"] \arrow[d,"z^{l/t}"'] 
    & \mathbb{C}\mathbb{P}^{1} \arrow[d,"z^{l/t}"] \\
\mathbb{C}\mathbb{P}^{1} \arrow[r,"\psi"'] \arrow[dr,"\beta"']
    & \mathbb{C}\mathbb{P}^{1} \arrow[d,"\hat{\beta}_{k,m,l}^t"] \\
    & \mathbb{C}\mathbb{P}^{1}
\end{tikzcd}
\]
where $\psi = z^a R^{l/t}(z)$, providing us with the decomposition 
\begin{equation}\label{eql} 
\beta = \hat{\beta}_{k,m,l}^t \circ \psi.
\end{equation} 
Further analysis of cyclic subgroups in $A_4$, $S_4$, $A_5$, and $D_{2n}$ leads to a description of the passports of $\hat{\beta}_{k,m,l}^t$, where, in fact, it is sufficient to consider only those with $t = 1$.
\end{proof}

Theorem~\ref{72} implies the following corollary in the same way that Theorem~\ref{71} yields Corollary~\ref{c71}.
f
\begin{corollary}\l{7c} 
The list of realizable passports of $2$-Platonic graphs that are non-uniform is as follows:
\[
\begin{array}{@{}l@{\qquad}l@{}}
\left((2^{2s}), (3^s,s), (3^s,s), 4s\right)\!,
& s \not\equiv 0 \pmod 3, \\[1ex]
\left((2^{2s},s^2), (3^{2s}), (3^{2s}), 6s\right)\!,
& s \not\equiv 0 \pmod 2, \\[1ex]
\left((2^{3s}), (3^{2s}), (4^s,s,s), 6s\right)\!,
& s \not\equiv 0 \pmod 4, \\[1ex]
\left((2^{4s}), (3^{2s},s^2), (4^{2s}), 8s\right)\!,
& s \not\equiv 0 \pmod 3, \\[1ex]
\left((2^{5s},s^2), (3^{4s}), (4^{3s}), 12s\right)\!,
& s \not\equiv 0 \pmod 2, \\[1ex]
\left((2^{6s}), (3^{4s}), (5^{2s},s^2), 12s\right)\!,
& s \not\equiv 0 \pmod 5, \\[1ex]
\left((2^{10s}), (3^{6s},s^2), (5^{4s}), 20s\right)\!,
& s \not\equiv 0 \pmod 3, \\[1ex]
\left((2^{14s},s^2), (3^{10s}), (5^{6s}), 30s\right)\!,
& s \not\equiv 0 \pmod 2, \\[1ex]
\left((2^{ks},s), (2^{ks},s), ((2k+1)^s), 2ks+s\right)\!,
& s \not\equiv 0 \pmod 2, \\[1ex]
\left((2^{ks},s^2), (2^{(k+1)s}), ((2k+2)^s), 2ks+2s\right)\!,
& s \not\equiv 0 \pmod 2,
\end{array}
\]
where $s \ge 2$ and $k \ge 1$ are integers.
\end{corollary}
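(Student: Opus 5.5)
We follow the same route by which Corollary~\ref{c71} was obtained from Theorem~\ref{71}. There are two inclusions to prove: every realizable passport appears in the list, and every listed passport is realized.

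\emph{Necessity.} Let $G$ be a non-uniform bicolored $2$-Platonic graph with Belyi function $\beta$.
\begin{enumerate}[label={\rm (\roman*)}]
\item By Theorem~\ref{72}, $\beta=\hat\beta\circ\psi$, where $\hat\beta$ is the Belyi function of one of the ten $2$-uniform passports listed there.
\item Each listed $\hat\beta$ has all but two multiplicities equal to the Platonic values $k,m,l$. All multiplicities of $\beta$ outside the two exceptional points are also exactly $k,m,l$. The chain rule then forces $\psi$ to be unramified at every non-exceptional preimage.
\item The chain rule also shows that the two exceptional points of $G$ are mapped by $\psi$ to the two exceptional points of $\hat\beta$. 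Hence $\psi$ has at most two critical points.
\item By Theorem~\ref{51}, or Riemann--Hurwitz, $\psi$ cannot have exactly one critical point. So it has exactly two, and normalizing as in the proof of Theorem~\ref{t3} gives $\psi=z^s$ with the exceptional points of $\hat\beta$ at $0$ and $\infty$.
\item The passport of $\hat\beta\circ z^s$ is now read off. Every non-exceptional cycle of length $c$ over $0,1,\infty$ is repeated $s$ times. Each exceptional multiplicity $e$ of $\hat\beta$ at $0$ or $\infty$ becomes $es$.
\end{enumerate}
Since every exceptional multiplicity in the list of Theorem~\ref{72} equals $1$, the exceptional entries become $s$. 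This produces exactly the displayed families, for instance $((2^2),(3,1),(3,1),4)\mapsto((2^{2s}),(3^s,s),(3^s,s),4s)$ and $((2^k,1),(2^k,1),(2k+1),2k+1)\mapsto((2^{ks},s),(2^{ks},s),((2k+1)^s),(2k+1)s)$. In the last family, the single $(2k+1)$-cycle over $\infty$ becomes $s$ cycles of length $2k+1$, because $0$ and $\infty$ are the exceptional points lying over $0$ and $1$.

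The congruence conditions come from non-uniformity. If $s$ is divisible by the relevant Platonic index ($3$, $2$, $4$, $3$, $2$, $5$, $3$, $2$, $2$, $2$ respectively), then the exceptional entries $s$ become divisible by the corresponding $k$, $m$ or $l$. The graph is then uniform and falls under Corollary~\ref{c71}, so it is excluded. Conversely, when $s$ is not so divisible, $s$ is not a multiple of the Platonic value, so the graph is genuinely non-uniform. We must check this value for each family: in the first family the exceptional entries sit in the $3$-partitions, so the condition is $3\nmid s$, and so on. We also take $s\ge2$, since for $s=1$ the exceptional entries equal $1$ and the case is degenerate, consistent with the convention of Corollary~\ref{c71}. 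The family $\left((2^{k}, 1^2), (2^{k+2}), (2k+2), 2k + 2 \right)$ should be checked against the convention excluding $(2,2,r)$ Platonic graphs.

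\emph{Sufficiency.} Each listed passport is realized explicitly by $\hat\beta\circ z^s$, with $\hat\beta$ normalized so that its two exceptional points lie at $0$ and $\infty$.
\begin{enumerate}[label={\rm (\alph*)}]
\item This normalization is possible by Möbius precomposition.
\item $\hat\beta$ exists because each passport in Theorem~\ref{72} is realizable. Most are quotients of Platonic graphs, as in \eqref{tho}; the two infinite families are explicit small dessins, such as a path or a star with a loop.
\item The composition is a Belyi function because $z^s$ is ramified only over $0$ and $\infty$, which $\hat\beta$ sends into $\{0,1,\infty\}$.
\end{enumerate}

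\emph{Main obstacle.} The delicate step is ensuring that $\psi$ has no further ramification. This requires checking that in every listed $\hat\beta$ the non-exceptional multiplicities are \emph{equal} to the Platonic values and not merely divisible by them, so that any extra ramification of $\psi$ would break the $2$-Platonic condition for $\beta$. For example, the entries $1^2$ in the $2$-partition are exceptional, and the rest are exactly $2$. This is a finite inspection of the list.
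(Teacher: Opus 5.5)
Your route is the one the paper intends. Theorem~\ref{72} gives $\beta=\hat\beta\circ\psi$; you then show that $\psi$ is a power map, read off the passport of $\hat\beta\circ z^s$, and conversely compose with $z^s$, exactly as Corollary~\ref{c71} comes from Theorem~\ref{71}. Two steps, however, fail as written.

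\textbf{Steps (ii)--(iii).} The chain rule $\deg_z\beta=\deg_{\psi(z)}\hat\beta\cdot\deg_z\psi$ does show two things:
\begin{enumerate}
\item the exceptional points of $\beta$ go to exceptional points of $\hat\beta$, since otherwise their multiplicities would be divisible by the Platonic value;
\item $\psi$ is unramified at every $z$ with $\psi(z)$ non-exceptional.
\end{enumerate}
It says nothing at a non-exceptional point $z$ of $\beta$ lying over an exceptional point $w$ of $\hat\beta$. In every listed passport $\deg_w\hat\beta=1$, so $\deg_z\psi=\deg_z\beta$ equals the Platonic value, and $\beta$ is still $2$-Platonic at $z$. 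So ``at most two critical points'' does not follow. This is precisely the ramification your ``main obstacle'' paragraph says would break the $2$-Platonic condition, and locally it does not break it.

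What you do get is that $\psi$ has at most two critical \emph{values}, namely the exceptional points $w_1,w_2$ of $\hat\beta$; critical points of $\psi$ outside $\beta^{-1}(\{0,1,\infty\})$ would be critical points of $\beta$. Riemann--Hurwitz then forces $\psi=\mu_1\circ z^s\circ\mu_2$ with each $\psi^{-1}(w_i)$ a single point. Since the two exceptional points of $\beta$ lie in $\psi^{-1}(\{w_1,w_2\})$, they are exactly these two points, and only then does step (v) apply. This is where the argument departs from the proof of Theorem~\ref{t3}: there the right-hand factor $\beta_{k,l}$ has no exceptional points, so the chain rule alone suffices.

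Two smaller omissions belong here as well. To invoke Theorem~\ref{72} at all you need $G$ to be $2$-uniform, i.e.\ both exceptional entries non-divisible. That is where Theorem~\ref{h2} enters, not in step (iv). Also, the fact that $\hat\beta$ has the same Platonic values in the same fibers as $\beta$ comes from the proof of Theorem~\ref{72}, where $\hat\beta$ is a cyclic quotient of $\beta_{k,m,l}$, not from its statement.

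\textbf{The exclusion of $s=1$.} Your justification is wrong. For $s=1$ one has $\beta=\hat\beta$, and every passport of Theorem~\ref{72} is itself a realizable non-uniform $2$-Platonic passport. For example, $((2^2),(3,1),(3,1),4)$ is of type $(2,3,3)$ with one exceptional white vertex and one exceptional face. Likewise $((2,1),(2,1),(3),3)$, a path with three edges, is of type $(2,2,3)$. Nothing degenerates, unlike Corollary~\ref{c71}, where $s=1$ returns the Platonic graph with no exceptions at all. So your necessity argument, once repaired, proves the classification with $s\ge1$. The bound $s\ge2$ in the statement seems to have been carried over from Corollary~\ref{c71} and is too restrictive, and you should not try to justify it.

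\textbf{Minor points.}
\begin{enumerate}
\item The tenth passport of Theorem~\ref{72} should read $((2^k,1^2),(2^{k+1}),(2k+2),2k+2)$, as the last row of the corollary confirms.
\item Both infinite families come from paths, with endpoints of different colors in the ninth family and both black in the tenth, not from a star with a loop.
\item The convention about $(2,2,r)$ causes no trouble, since for odd $s$ the first partition contains entries $s\neq 2$.
\end{enumerate}
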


\section{Further directions} 
\subsection{Uniform and 2-uniform graphs} 

The results of the previous section completely solve the realizability problem for passports of bicolored $2$-Platonic graphs. This naturally leads to a more general problem about the realizability of passports of  bicolored uniform and $2$-uniform graphs. 
More precisely, we can formulate the problem as follows. 

\pagebreak

\begin{problem} 
For a Platonic triple $(k,m,l)$, what are the realizable passports of the form 
\begin{equation} \label{izz0} 
\big((\overline{k}^*), (\overline{m}^*), (\overline{l}^*), n\big), 
\end{equation} 
or 
\begin{equation} \label{izz} 
\big((\overline{k}^*), (\overline{m}^*), (\overline{l}^*,u,v), n\big), \quad u,v\not\equiv 0 \pmod{l}, 
\end{equation} 
or 
\begin{equation} \label{izz2} 
\big((\overline{k}^*), (\overline{m}^*,u), (\overline{l}^*,v), n\big), \quad u\not\equiv 0 \pmod{m}, \ v\not\equiv 0 \pmod{l}?
\end{equation}
\end{problem}

The results of Section 7  provide handy necessary conditions for the existence of such passports. For example, for passports of the form~\eqref{izz0}, Theorem~\ref{71} implies that $n$ is divisible by $n(k,m,l)$. Moreover, the chain rule implies that the maximal degree of a black vertex does not exceed $\frac{n \cdot k}{n(k,m,l)}$. 

Similarly, for passports of the form~\eqref{izz} or \eqref{izz2}, Theorem~\ref{72} and Corollary~\ref{7c} provide necessary conditions for their realizability depending on their type in terms of the divisibility of $n$ and the maximum values of $u$ and $v$ or their sum. Nevertheless, it is not clear whether these conditions are \textit{sufficient}.

A possible way to approach this is to construct a rational function on the right-hand side of the equality $\beta = \hat{\beta} \circ \psi$ from Theorem~\ref{72} so that the left-hand side has the desired passport, or to prove that no such function can exist. Obviously, this is not as straightforward as in the case of $2$-Platonic graphs, where $\psi$ is a mere power function. However, it might turn out to be simpler than the original problem.

\subsection{Hurwitz existence problem for generalized Latt\`es maps} 
In a completely analogous way to $2$-uniform and $2$-Platonic graphs, one can define $3$-uniform and $3$-Platonic graphs, and more generally, bicolored $3$-uniform and $3$-Platonic graphs (note that the triple of parameters $(k,m,l)$ for such graphs is not necessarily Platonic). Transitioning to this class of objects obviously presents significant difficulties, and we are aware of only a few works on this topic, all concerning the $3$-Platonic case, namely, \cite{ak}, \cite{deza15}, \cite{fro3}, and \cite{ko}. 

We mention that Theorem~3 in \cite{pak} can also be applied to certain $3$-uniform bicolored graphs, providing decompositions in the spirit of Theorem~\ref{71}, where the left factors correspond now  to proper \textit{non-cyclic} subgroups of the automorphism group of the corresponding Platonic graph. Namely, it applies to passports of the following forms:
$$
\begin{array}{@{}ll@{}}
\big( (\overline{2}^*,u,v), (\overline{3}^*, w), (\overline{4}^*), n \big), & u,v \not\equiv 0 \pmod 2, \ w \not\equiv 0 \pmod 3, \\[0.8ex]
\big( (\overline{2}^*,u,v), (\overline{3}^*), (\overline{4}^*, w), n \big), & u,v \not\equiv 0 \pmod 2, \ w \equiv 2 \pmod 4, \\[0.8ex]
\big( (\overline{2}^*,u,v), (\overline{3}^*), (\overline{5}^*, w), n \big), & u,v \not\equiv 0 \pmod 2, \ w \not\equiv 0 \pmod 5, \\[0.8ex]
\big( (\overline{2}^*,u,v), (\overline{3}^*, w), (\overline{5}^*), n \big), & u,v \not\equiv 0 \pmod 2, \ w \not\equiv 0 \pmod 3, \\[0.8ex]
\big( (\overline{2}^*,u,v,w), (\overline{3}^*), (\overline{5}^*), n \big), & u,v,w \not\equiv 0 \pmod 2, \\[0.8ex]
\big( (\overline{2}^*,u), (\overline{3}^*, v,w), (\overline{5}^*), n \big), & u \not\equiv 0 \pmod 2, \ v,w \not\equiv 0 \pmod 3.
\end{array}
$$
\vskip 0.2cm

While referring the reader to the cited papers for further details, we want to pose the problem of describing certain $3$-uniform passports, which seems particularly interesting to us from two perspectives. First, this is the first large class of almost-uniform passports to which the methods of \cite{pak} do not provide a representation of the corresponding Belyi function in the form of a composition. Second, when these passports are realizable, the corresponding rational functions are \textit{generalized Lattès maps}, which naturally appear in many problems of complex and arithmetic dynamics. 

Directing the reader to \cite{lattes} (and to \cite{pak} in the context of the Hurwitz existence problem) for general definitions and properties of generalized Lattès maps, which, in general, have more than three critical values, we define the passports of interest explicitly as follows.

\begin{problem} \l{82}
What are the realizable passports of the form 
\[
\begin{array}{@{}ll@{}}
\big( (\overline{2}^*,u), (\overline{3}^*, v), (\overline{5}^*,w), n \big), & u \not\equiv 0 \pmod 2, \ v \not\equiv 0 \pmod 3, \ w \not\equiv 0 \pmod 5, \\[0.8ex]
\big( (\overline{2}^*,u), (\overline{3}^*, v), (\overline{4}^*,w), n \big), & u,w \not\equiv 0 \pmod 2, \ v \not\equiv 0 \pmod 3, \\[0.8ex]
\big( (\overline{2}^*,u), (\overline{3}^*, v), (\overline{3}^*,w), n \big), & u \not\equiv 0 \pmod 2, \ v,w \not\equiv 0 \pmod 3, \\[0.8ex]
\big( (\overline{2}^*,u), (\overline{3}^*, v, w), (\overline{3}^*), n \big), & u \not\equiv 0 \pmod 2, \ v,w \not\equiv 0 \pmod 3?
\end{array}
\]
\end{problem}

Although functions with such passports cannot be represented as compositions, they can be related to another highly remarkable class of objects: rational functions that commute with finite automorphism groups of $\mathbb{C}\mathbb{P}^1$. 

In more detail, let $f$ be a rational function whose passport, for example, is the first one from the list above, and assume that the points where the multiplicities of $f$ are exceptional are $0, 1$, and $\infty$. Consider the composition $f \circ \beta_{2,3,5}$. It is easy to see that the latter is a Belyi function with a uniform passport of type $(2,3,5)$. Consequently, by Theorem~\ref{71}, there exists a rational function $\phi$ such that the diagram 
\begin{equation}\label{d}
\begin{tikzcd}[row sep=2.7em, column sep=3.5em]
\mathbb{C}\mathbb{P}^1 \arrow[r,"\phi"] \arrow[d,"\beta_{2,3,5}"'] 
    & \mathbb{C}\mathbb{P}^1 \arrow[d,"\beta_{2,3,5}"] \\
\mathbb{C}\mathbb{P}^1 \arrow[r,"f"'] 
    & \mathbb{C}\mathbb{P}^1
\end{tikzcd}
\end{equation}
commutes.

Recall that $\beta_{2,3,5}$ is an invariant function for a group $G$ isomorphic to $A_5$, which is the symmetry group of the icosahedron, and it can be shown that the commutativity of diagram~\eqref{d} implies the existence of an automorphism $\sigma \colon G \to G$ such that
\begin{equation}\label{cond}
\phi \circ \mu = \sigma(\mu) \circ \phi, \qquad \mu \in G.
\end{equation}
Conversely, if a function $\phi$ satisfies~\eqref{cond} for some automorphism $\sigma$ of $G \cong A_5$, then it descends to the function $f$ that makes the diagram~\eqref{d} commute, and whose passport is the first one from the list above (see~\cite{lattes}). Similar results hold for other types of passports in the problem under consideration.

The above shows that the problem of describing functions $f$ with passports as mentioned in Problem~\ref{82} turns out to be equivalent to describing functions that satisfy~\eqref{cond} for groups $G$ isomorphic to $A_4$, $S_4$, or $A_5$. A particular case of~\eqref{cond} is when $\sigma = \mathrm{id}$, that is, when $\phi$ commutes with $G$. Moreover, one can easily see that while $\phi$ does not in general commute with $G$, some of its iterates does. On the other hand, functions commuting with finite subgroups of $\mathrm{Aut}(\mathbb{CP}^1)$ are described in~\cite{dm}. Although this description does not immediately yield a characterization of functions with the required passport, we believe this connection is quite remarkable and can be used to approach the Hurwitz existence problem for these passports.

\end{document}